\definecolor{lightblue}{rgb}{.60,.60,1}
\newcommand{\handout}[5]{
   \noindent
   \begin{center}
   \framebox{
      \vbox{
    \hbox to 5.78in { {\bf Draft- do not distribute} \hfill #2 }
       \vspace{4mm}
       \hbox to 5.78in { {\Large \hfill #5  \hfill} }
       \vspace{2mm}
       \hbox to 5.78in { {\it #3 \hfill #4} }
      }
   }
   \end{center}
   \vspace*{4mm}
}
\newtheorem{theorem}{Theorem}
\newtheorem{lemma}[theorem]{Lemma}
\newtheorem{question}[theorem]{Question}
\newtheorem{proposition}[theorem]{Proposition}
\newtheorem{definition}[theorem]{Definition}
\newtheorem*{championlemma}{Champions Lemma}
\theoremstyle{remark}
\newtheorem*{remark}{Remark}
\newcommand{\size}[1]{{\left|#1\right|}}
\newcommand{\pair}[1]{{\left\langle#1\right\rangle}}
\renewcommand{\H}{\mathsf{H}}
\renewcommand{\L}{\mathsf{L}}
\newcommand{\MIN}{\mathrm{MIN}}
\newcommand{\HIGH}{\mathrm{HIGH}}
\renewcommand{\phi}{\varphi}
\DeclareMathOperator{\poly}{poly}
\newcommand{\0}{\emptyset}
\newcommand{\T}{\mathrm{T}}
\newcommand{\N}{\mathbb{N}}
\newcommand{\join}{\mathrel{\oplus}}
\newcommand{\converge}{\mathop\downarrow}
\newcommand{\diverge}{\mathop\uparrow}
\newcommand{\complement}{\overline}
\renewcommand{\min}{{\textstyle\mathop{\mathrm{min}}}}
\numberwithin{equation}{section}
\title{On approximate decidability of minimal programs}
\author{
Jason Teutsch\footnote{Research supported by the Centre for Quantum Technologies, Institute for Mathematical Sciences of the National University of Singapore, NUS grant number R146-000-181-112, and a Fulbright Fellowship provided by the United States-Israel Educational Foundation.} \\
\emph{Ben-Gurion University} \\
\url{teutsch@cs.uchicago.edu}
\and 
Marius Zimand\footnote{The author was supported in part
by NSF grant CCF 1016158.}\\
\emph{Towson University}\\
\url{mzimand@towson.edu}
}
\begin{document}

\maketitle

\begin{abstract}
An index $e$ in a numbering of partial-recursive functions is called minimal if every lesser index computes a different function from $e$.  Since the 1960's it has been known that, in any reasonable programming language, no effective procedure determines whether or not a given index is minimal.  We investigate whether the task of determining minimal indices can be solved in an approximate sense.  Our first question, regarding the set of minimal indices, is whether there exists an algorithm which can correctly label 1 out of $k$ indices as either minimal or non-minimal.  Our second question, regarding the function which computes minimal indices, is whether one can compute a short list of candidate indices which includes a minimal index for a given program.  We give some negative results and leave the possibility of positive results as open questions.
\end{abstract}

\section{Occam's razor for algorithms}

In any reasonable programming system, one can code a computable function many different ways.  The shortest such code has  practical value, theoretical significance, and philosophical allure.  Despite their implicit appeal to simplicity, shortest codes remain elusive.  Indeed no algorithm can enumerate more than finitely many shortest codes \cite{Blu67,Sch98}, and, as a particular consequence, there is no effective way to obtain or to recognize a shortest code.\footnote{For 
background on the set of minimal codes, see the survey article \cite{Sch98} and the more recent articles \cite{JST11, ST08, Teu07}.}

Shortest descriptions of finite strings exhibit similar ineffectivity phenomena. It is well known that any algorithm can enumerate at most finitely many strings of high complexity \cite{LV08}, that no unbounded computable function is a lower bound for Kolmogorov complexity \cite{ZL70}, and that any algorithm mapping a string to a list of values containing its Kolmogorov complexity must, for all but finitely many lengths~$n$, include in the list for some string of length~$n$ at least a fixed fraction of the lengths below $n+O(1)$ \cite{BBFFGLMST06}.  This paper adds to this list of inapproximability results that the set of Kolmogorov random strings is not $(1,k)$-recursive for any~$k$ (Theorem~\ref{thm: howfatisshe}).

In contrast,  several recent works~\cite{BMVZ13,BZ14,Teu14, Zim14} have revealed that a certain type of approximation, called \emph{list approximation}, can be obtained for shortest descriptions of strings in a surprisingly efficient way. In list approximation, instead of achieving the ideal objective of constructing an object that has some coveted property, we are content if we at least can construct a short list (of ``suspects") guaranteed to contain such an object. Bauwens, Makhlin, Vereshchagin, and Zimand~\cite{BMVZ13} showed that there is a computable function which maps strings to quadratic-length lists of strings such that one element in the list is a description of the given string with minimal length (within an additive constant).  Moreover, similar lists with slightly weaker parameters can actually be constructed in polynomial time. Teutsch~\cite{Teu14}, Zimand~\cite{Zim14} and Bauwens and Zimand~\cite{BZ14} have obtained polynomial-time constructions with improved parameters. 

In the wake of these recent positive results for finite strings, it is natural to ask whether such list approximations transfer to the semantic case of partial-recursive functions. This paper investigates this question and also the $(1,k)$-recursiveness of the set of minimal programs, which might be viewed as the decidability analog of list approximation for sets.

The shortest program of a function depends on the programming system. Formally, a programming system is a \emph{numbering} $\phi$, given by a partial-recursive function $U$ mapping pairs of natural numbers to natural numbers. For every $e \geq 0$, the $e$-th function in the numbering, denoted $\phi_e$, is defined as $\phi_e(x) = U(e,x)$. The main objects of interest in this paper are defined as follows.
\begin{definition} Let $\phi$ be a numbering.
\begin{enumerate}[\scshape (i)]
\item
$\MIN_\phi = \{e \colon (\forall j<e)\: [\phi_j \neq \phi_e]\}$
is the \emph{set of $\phi$-minimal indices}.  
\item
The function $\min_\phi(e)$ denotes the unique index $j \in \MIN_\phi$ such that $\phi_j = \phi_e$.
\end{enumerate}
\end{definition}
\paragraph{Numberings.}  There are many effective ways to enumerate partial-recursive functions.  If we want a meaningful notion of ``shortest program,'' then at the very least we should consider only \emph{universal} numberings, that is, numberings which include all partial-recursive functions.  This class of numberings still contains pathologies, such as the Friedberg numberings  \cite{Fri58, Kum90}.  In a Friedberg numbering every partial-recursive function has a unique index, and hence every index is minimal.  In computability theory one typically uses \emph{acceptable numberings} (also known as \emph{G\"{o}del numberings}), and previous studies of minimal indices focused on this type of numberings.  We recall that a numbering $\phi$ is called \emph{acceptable} if for every further numbering $\psi$ there exists a recursive function $f$ (the ``translation function'') such that $\phi_{f(e)} = \psi_e$ for all $e$. 

During this investigation we have observed that, despite appeals to the Recursion Theorem and hardness of index sets in prior literature, the effectiveness of the translation function for acceptable numberings seems to serve as a red herring when dealing with minimal indices.  Many basic results involving acceptable numberings continue to hold if we merely require a computable bound on the output of the translation function. Consequently, we introduce the following type of numbering.
\begin{definition}
\label{d:cbnumbering}
A numbering $\phi$ is called \emph{computably bounded} if for any further numbering $\psi$, there exists a computable function $f$ such that for any $e$, $\phi_j = \psi_e$ for some $j \leq f(e)$.
\end{definition}

Robust results require an absolute definition of ``shortest program'' in which computability properties do not depend on the underlying numbering.  \emph{Kolmogorov} numberings capture this notion.  An acceptable numbering $\phi$ is called a \emph{Kolmogorov} numbering if for every further numbering $\psi$ the corresponding \emph{translation} function $f$ is linearly bounded, that is there exists a positive constant $c$ such that for every index $e$, $f(e) \leq ce + c$.  The standard universal Turing machine \cite{Soa87,Tur36} is an example of a Kolmogorov numbering.  The point is that if we define $\size{e}$, the length of the ``program" $e$,  as $\log e$, then $\size{f(e)} \leq \size{e} + O(1)$, and therefore if $\phi$ is a Kolmogorov numbering and $\psi$ is an arbitrary numbering, then for every index $e$, $\size{\min_\phi(e)} \leq \size{\min_\psi(e)} +O(1)$ where the constant $O(1)$ depends only on the linear translation function from $\psi$ to $\phi$.   These same inequalities hold when the translation function~$f$ is linear but not necessarily recursive. Such a numbering, where the translation function is linearly bounded but not necessarily recursive, is said to have the \emph{Kolmogorov property}.  Numberings with the Kolmogorov property are sometimes referred to as \emph{optimal} because, as we have observed with universal machines for Kolmogorov complexity, any numbering can be translated into it with a constant overhead increase in program length.  Previously, Jain, Stephan, and Teutsch \cite{JST11} investigated Turing degrees for sets of minimal indices with respect to numberings with the Kolmogorov property.

We conclude our discussion on numberings by separating the notions of acceptable numbering and numbering with the Kolmogorov property.  The reader may want to read first the ``Notation and basic prerequisites'' paragraph at the end of this section.

\begin{proposition}[Stephan]
A numbering with the Kolmogorov property need not be acceptable, and vice versa.
\end{proposition}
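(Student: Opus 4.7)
The plan is to prove the two separations independently by exhibiting a concrete numbering in each direction; throughout, fix a background acceptable Kolmogorov numbering $\phi$.

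\smallskip\noindent
\emph{Acceptable $\not\Rightarrow$ Kolmogorov property.}
Define $\phi'$ by $\phi'_{2^e} = \phi_e$ for $e \ge 0$ and $\phi'_n = \emptyset$ whenever $n$ is not a power of~$2$; this is a valid numbering since testing whether an integer is a power of~$2$ is decidable. The numbering is acceptable because for any further $\chi$, acceptability of $\phi$ provides a computable $f_\chi$ with $\phi_{f_\chi(e)} = \chi_e$, and then $e \mapsto 2^{f_\chi(e)}$ is a computable translation of $\chi$ into $\phi'$. To see that $\phi'$ lacks the Kolmogorov property, take $\psi = \phi$ and pick any $\phi$-minimal index $e$ with $\phi_e \neq \emptyset$ (infinitely many such $e$ exist by universality of $\phi$). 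The only $\phi'$-indices computing $\phi_e$ are of the form $2^{e'}$ with $\phi_{e'} = \phi_e$, and the smallest one is $2^e$ because $e$ is $\phi$-minimal; so no linear translation from $\phi$ to $\phi'$ exists.

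\smallskip\noindent
\emph{Kolmogorov property $\not\Rightarrow$ Acceptable.}
The plan is to construct a \emph{Friedberg numbering} $\phi'$---one in which each partial recursive function has a unique index---while arranging via a priority argument that the unique $\phi'$-index of $\phi_e$ is of size $O(e)$. No Friedberg numbering can be acceptable: an acceptable numbering admits a computable padding function $p$ with $p(e) > e$ and $\phi'_{p(e)} = \phi'_e$, forcing each function to have infinitely many indices, contrary to uniqueness. The Kolmogorov property, on the other hand, follows by composition: any further numbering $\psi$ translates linearly and computably into $\phi$ (because $\phi$ is Kolmogorov), and this translation composes with the linear but generally non-computable map sending $e$ to the unique $\phi'$-index of $\phi_e$, yielding a linear translation of $\psi$ into $\phi'$.

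\smallskip\noindent
\emph{Main obstacle.} The crux is the construction of $\phi'$ itself: a Friedberg-style priority argument in which, at stage $e$, one inspects $\phi_e$ and either identifies it with a previously committed function of $\phi'$ or provisionally commits it to the smallest unused slot, which must be maintained of size $O(e)$. When later computation exposes a previously unnoticed equality $\phi_e = \phi_{e'}$ that has caused two slots to describe the same function, one repairs the duplicate in Friedberg's fashion by forcing the offending slot to diverge on a fresh input, thereby separating the two functions in the limit. Simultaneously maintaining the Friedberg invariant and the linear placement of indices is the heart of the argument.
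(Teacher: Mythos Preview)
Your first direction (acceptable $\not\Rightarrow$ Kolmogorov property) is correct and is exactly the paper's argument.

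Your second direction has a fatal flaw: no Friedberg numbering can have the Kolmogorov property, so the object you propose to construct does not exist. This follows immediately from Lemma~\ref{l:highmin} of the paper: if $\phi'$ has the Kolmogorov property, then every $x\in\MIN_{\phi'}$ satisfies $C(x)\ge |x|-\log|x|$ (up to an additive constant). In a Friedberg numbering $\MIN_{\phi'}=\N$, so this would force \emph{every} sufficiently large integer to be nearly incompressible, which is absurd. Concretely, take any compressible $x$ (say $x=2^n$, with $C(x)=O(\log n)$); a short $U$-program $p$ for $x$ yields, via the numbering $\psi_q=\phi'_{U(q)}$ and the linear translation bound, an index $v<x$ with $\phi'_v=\phi'_x$, contradicting minimality of $x$. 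So your priority construction cannot succeed no matter how the details are arranged; the difficulty you flag as the ``main obstacle'' is in fact an impossibility. (Your sketch also has the mechanics of Friedberg's repair backwards---one cannot effectively detect equality of partial functions, and separation is achieved by forcing a fresh \emph{convergence}, not a divergence---but this is moot given the above.)

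The paper's route is quite different and much simpler: take a limit-computable bi-immune set $A$ (e.g.\ Chaitin's $\Omega$), and for a Kolmogorov numbering $\phi$ define $\psi_{2e}=\phi_e$ when $e\in A$ and $\psi_{2e+1}=\phi_e$ when $e\notin A$, filling the remaining slots with finite-domain ``garbage'' functions obtained by freezing the simulation whenever the approximation to $A$ flips. The translation bound for $\psi$ is at most twice that of $\phi$, so $\psi$ inherits the Kolmogorov property; but any computable $f$ with $\psi_{f(x)}$ the constant-$x$ function would, via the parity of $f(x)$, enumerate an infinite subset of $A$ or of its complement, contradicting bi-immunity. Hence $\psi$ is not acceptable.
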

\begin{proof}
First, let us observe that an acceptable numbering need not have the Kolmogorov property.  For any acceptable $\phi$, the numbering $\psi$ given by $\psi_{2^e} = \phi_e$ and where non-powers of two $\psi$-indices give the everywhere divergent function is an acceptable numbering which does not have the Kolmogorov property.  Indeed, for any index $e \in \MIN_\phi$ besides the minimal index for the everywhere divergent function, the least $\psi$-index which computes $\phi_e$ is $2^e$.  Therefore no linearly bounded translation function from $\phi$ to $\psi$ exists.

Next we construct a numbering with the Kolmogorov property which is not acceptable.  Let $A = \lim A_s$ be a limit-computable \cite{Soa87}, bi-immune set, for example Chaitin's Omega \cite{DH10,LV08}.  Let $\phi$ be a numbering with the Kolmogorov property, and define a further numbering~$\psi$ by
\[
\psi_{2e}=
\begin{cases}
\phi_e &\text{if $e \in A$, and} \\
\text{some function with finite domain} &\text{otherwise,}
\end{cases}
\]
and similarly for $\psi_{2e+1}$ but with ``$e \in A$'' replaced with ``$e \notin A$.''  This can be done by setting $\psi_{2e,s} = \phi_{e,s}$ at stages~$s$ where $e \in A_s$ and making $\psi_{2e+1,s} = \phi_{e,s}$ when $e \notin A_s$.  In the other stages, we simply freeze the computations of $\psi_{2e}$ or $\psi_{2e+1}$.  Since $A$ is limit-computable, after some finitely stage, one of $\psi_{2e}$ and $\psi_{2e+1}$ becomes permanently frozen and the other function goes on to compute $\phi_e$.

Now the translation bound for $\psi$ is at most twice the translation bound for $\phi$, hence $\psi$ has the Kolmogorov property.  On the other hand, $\psi$ cannot be acceptable.  Suppose that there were some computable function~$f$ such that $\psi_{f(x)}$ is the everywhere constant~$x$ function.  Note that if $f(x)$ is even, $f(x)/2 \in A$, and if $f(x)$ is odd, then $(f(x)-1)/2 \notin A$.  Now infinitely often the index $f(x)$ must either be even or odd, contradicting that $A$ is bi-immune.
\end{proof}

\paragraph{Our results and paper roadmap.} As already mentioned, we seek to understand to what extent and for which type of numberings $\min_\phi$ is list approximable and/or $\MIN_\phi$ is $(1,k)$-recursive. The notion of list approximability of functions has been already explained.  We now define the other type of approximability in a slightly generalized form.  Below we use $\chi_A$ to denote the characteristic function for the set~$A$.

\begin{definition} A set of integers $A$ is called $(m,k)$-recursive if there exists a computable function mapping $k$-tuples of strings to labels $\{0,1\}^k$ such that for every tuple $(x_1, \ldots, x_k)$  the vectors $(\chi_A(x_1), \dotsc, \chi_A(x_k))$ and $f(x_1, \dotsc, x_k)$ coincide in at least $m$ positions (i.e., at least $m$ of the labels are correct if we interpret $0$ as ``not-in $A$" and $1$ as ``in A.")
\end{definition}

We formalize the two main problems investigated in this paper, and describe our contribution.  In his Masters thesis, a survey article on minimal indices, Schaefer posed the following problem.
\begin{question}[\cite{Sch98}] \label{ques: schaefer's problem}
Does there exist an acceptable numbering $\phi$ and a positive integer~$k$ such that $\MIN_\phi$ is $(1,k)$-recursive?
\end{question}
Schaefer showed that there exists a Kolmogorov numbering $\psi$ such that $\MIN_\psi$ is not $(1,k)$-recursive for any $k$. In Section~\ref{s:minset}, we extend his existential result to all numberings with the Kolmogorov property (Theorem~\ref{t:onekrec}).  Schaefer showed that for any acceptable numbering $\phi$, $\MIN_\phi$ is not $(1,2)$-recursive, but his original problem for the case $k > 2$ remains open.

The second problem, which we dub the \emph{shortlist problem for functions} is as follows:
\begin{question} \label{ques: poly-log list}
Let $\phi$ be a numbering with the Kolmogorov property.   Does there exist a computable function which maps each index $e$ to a $\poly(\size{e})$-size list containing an index $j$ such that $\phi_e = \phi_j$ and $\size{j} \leq \size{\min_\phi(e)} + O(\log \size{e})$?
\end{question}
Ideally, we would like to replace the overhead ``$O(\log \size{e})$'' above with ``$O(1)$,'' however determining whether either of these bounds is possible appears to be outside the reach of present techniques. By ``polynomial-size list'' we mean a finite set having cardinality bounded by a polynomial in $\size{e}$, but the question is interesting for any non-trivial list size.

Our main results are proved in Section~\ref{s:minf}:
\begin{itemize}
\item If $\phi$ is an acceptable numbering and a computable function on input $e$ returns a list containing the minimal $\phi$-index for $\phi_e$, then the size of that list cannot be constant (Theorem~\ref{thm:listlength2}).

\item For every numbering $\phi$  with the Kolmogorov property, if a computable function on input $e$ returns a list containing the minimal $\phi$-index for $\phi_e$, then  the size of the list must be  $\Omega(\log^2 e)$ (Theorem~\ref{t:quadbound}).

\item There exists a Kolmogorov numbering~$\phi$ such that if  a computable function on input $e$ returns a list containing the minimal $\phi$-index for $\phi_e$, then  the size of that list must be  $\Omega(e)$ (Theorem~\ref{t:expbound}).
\end{itemize}
In summary, our results show that a computable list that contains the minimal index cannot be too small.  Along the lines of the second result  (Theorem~\ref{t:quadbound}), we formulate the following question whose finite string version has a positive answer~\cite{BMVZ13}: 
\begin{question} \label{ques: quadshort}
Does there exist a Kolmogorov numbering $\phi$ with a computable list that contains $\min_\phi(e)$ and has size $O(\log^2 e)$?
\end{question}
A positive answer to Question~\ref{ques: quadshort} would immediately yield a positive answer to Question~\ref{ques: poly-log list} when restricted to Kolmogorov numberings but not necessarily in general since the Kolmgorov property does not permit us to translate indices effectively.

These results have analogues if, roughly speaking, we substitute (computable function, minimal index) with
(finite string, Kolmogorov complexity) and they are easier to establish in the latter setting.  We obtain the results above by building bridges between the two settings, as we explain in Section~\ref{s:tech}, with several technical lemmas. In particular,  Lemma~\ref{lem:pdl} provides a connection between between Schaefer's problem and the shortlist problem for functions  (Question~\ref{ques: schaefer's problem} and Question~\ref{ques: poly-log list}), as this lemma is used in the proofs for both Theorem~\ref{t:onekrec} and Theorem~\ref{t:expbound}.  

Finally, in Section~\ref{s:compbound}, we extend some results from the literature regarding $\MIN_\phi$ from acceptable numberings to computably bounded numberings. If $\phi$ is an acceptable numbering, it is known that $\MIN_\phi$ is immune~\cite{Blu67}, is Turing equivalent to the jump of the halting problem~\cite{Mey72}, and is not $(1,2)$-recursive~\cite{Sch98}. We show that all this properties continue to hold if $\phi$ is a computably bounded numbering.

\paragraph{Notation and basic prerequisites.} For sets $A$ and $B$, we write $A \leq_\T B$ if $A$ is Turing reducible to $B$, that is, if $A$ can be computed using the set~$B$ as an oracle, and we say $A$ is $B$-computable.  $A \join B$ denotes the set $\{2x \colon x \in A\} \cup \{2x+1 \colon x \in B\}$. $K$ is the halting set.  A set $A$ is called \emph{$\Sigma^0_2$} if there exists computable predicate~$P$ such that $x \in A \iff (\exists y)\: (\forall z)\: [P(x,y,z)]$, and $A$ is \emph{$\Pi^0_2$} if a similar condition holds with the quantifiers reversed.  $A$ is $\Delta^0_2$ if it is both $\Sigma^0_2$ and $\Pi^0_2$, or equivalently by Post's Theorem, $A \leq_\T K$. 
An infinite set is called \emph{immune} if there is no algorithm which enumerates infinitely many of its members.  A set is \emph{bi-immune} if both the set and its complement are immune.  A set $A$ is limit-computable if there exists a computable sequence of sets $A_s$ such that $\lim_s A_s = A(x)$.  Throughout this paper, we use $\phi_{e,s}$ to denote the computation of $\phi_e$ up to $s$~steps, and it may happen that $\phi_e$ does not converge within $s$~steps.
 For more details on these notions, see \cite{Soa87}.  

Throughout this exposition, we fix a universal machine~$U$ and use $C(x) = \min \{\size{p} \colon U(p) = x\}$ to denote the Kolmogorov complexity of a string~$x$.  Here $\size{p}$ denotes $\log p$, the length of the program~$p$.  Similarly we will use $C(x \mid y) = \min \{\size{p} \colon U(p,y) = x\}$ to denote the \emph{conditional complexity of~$x$ given~$y$}.   Further background on Kolmogorov complexity can be found in the standard textbook by Li and Vit\'{a}nyi \cite{LV08}, as well as the forthcoming textbook by Shen and Vereshchagin~\cite{SV14}.

 An \emph{order} is an unbounded, nondecreasing function from $\N$ to $\N$.

\section{Proof techniques linking the shortest descriptions of strings and functions}\label{s:tech}
We start with a result that illustrates some of the proof techniques that we use later in more complicated settings.

\paragraph{Warm up: a winner-goes-on tournament.} \label{sec:wgo}
We show here that the \emph{set of strings with randomness deficiency~1}, $D = \{ x \colon C(x) \geq \size{x}-1\}$, is not $(1,2)$-recursive.  First, note that more than half of the strings at each length must belong to $D$ because there are $2^n$ strings of length~$n$ and at most $2^0+ 2^1 + \dotsc 2^{n-2} = 2^{n-1} - 1$ strings with randomness deficiency less than~1.  Suppose there were a computable function $f \colon \{0,1\}^2 \to \{\L,\H\}^2$ witnessing that $D$ is $(1,2)$-recursive, where ``$\L$'' is the label for low complexity (deficiency greater than 1), and ``$\H$'' is the label for high complexity.  We shall show that for every $n$, there is a string $x \in D$ of length $n$ such that $C(x) < \log n + O(1)$.

Consider the restriction of $f$ to strings of length~$n$, and suppose that some pair of binary strings $(x,y)$ of length~$n$ receives the label $(\H,\H)$ from~$f$.  Then by definition of~$f$, either $x$ or $y$ must have high complexity.  But when $(x,y)$ is the lexicographically least pair of strings of length $n$ satisfying $f(x,y) = (\H,\H)$, we can compute either $x$ or $y$ with a single advice bit given the length~$n$.  It follows that the Kolmogorov complexities for $x$ and $y$ are bounded by $\log n + O(1)$, which for sufficiently large~$n$ contradicts that one of them has randomness deficiency less than~1.

Thus it suffices to assume that the only labels which occur among pairs of binary strings with length~$n$ are $(\L,\H)$, $(\H,\L)$, and $(\L,\L)$.  We say that a set of strings $S$ of length~$n$ form a \emph{clique} if every pair of distinct vertices $(x,y) \in S$ receive either the label $(\L,\H)$ or $(\H,\L)$.  Fix $S$ as the lexicographically least clique which contains more than half of the strings of length~$n$.  Such a clique must exist because the set $D$ restricted to strings of length~$n$ forms a clique.  Furthermore any clique which contains more than half of the strings of length $n$ must contain a string of high complexity since most strings have high complexity.

At this point, we can limit our search for a complex string to the clique $S$ where only the labels $(\L,\H)$ and $(\H,\L)$ occur.  We run a ``winner-goes-on'' tournament on $S$.  A \emph{match} consists of a pair of strings, and the \emph{winner} of a match is the string labeled ``$\H$.''  The \emph{tournament} proceeds as follows.  We start with an arbitrary pair of of strings, and the winner of that match faces the next \emph{challenger}, a string which has not yet appeared in the tournament.  The winner of this second match goes on to face the subsequent challenger, and the tournament ends once all strings have appeared in the tournament at least once, that is, when we run out of new challengers.  The final winner of this tournament has high complexity.  Indeed, at some point in the tournament, a string with high complexity must enter then tournament, and thereafter the winner always has high complexity.  But we can describe this string of high complexity using $\log n + O(1)$ bits, a contradiction.  Therefore no such function~$f$ exists.
\medskip

Some of the main ideas used in the proof of the ``warm-up'' can be replicated for minimal indices. There are two key points in that proof. The first one  is that all the elements of the set we show is not $(1,2)$-recursive have high Kolmogorov complexity. In Section~\ref{s:kolmindices}, we observe that minimal indices also have relatively high Kolmogorov complexity.  The second key point is that the tournament is played among the elements of a simple set (namely the set of $n$-bit strings) that  has a high density (namely at least $1/2$) of strings with high complexity.  In Section~\ref{s:highdensity}, we show that  minimal indices in numberings that have the Kolmogorov property satisfy a similar density condition.  Finally, in Section~\ref{s:champion}, we show that an extension of the ``tournament'' argument from the ``warm-up'' can be used to show that certain sets are not $(1,k)$-recursive.

\subsection{The Kolmogorov complexity of minimal indices}
\label{s:kolmindices}
We observe that the elements of $\MIN_\phi$ have relatively high Kolmogorov complexity. 
\begin{lemma}\label{lem:okra}
 For every computably bounded numbering $\phi$, there exists a computable order~$g$ such that $C(x) \geq g(x)$ for all $x \in \MIN_\phi$.
\end{lemma}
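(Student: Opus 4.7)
The plan is to construct $g$ explicitly from the computable translation bound that the hypothesis supplies, together with the standard Kolmogorov-numbering trick that converts a short description of $x$ into a numerically small $\psi$-index for $\phi_x$. First I fix a Kolmogorov numbering $\psi$---concretely, the one naturally associated to the universal machine used to define $C$. Applying computable-boundedness of $\phi$ to $\psi$ yields a computable function $f\colon\N\to\N$, which I may take nondecreasing and unbounded, such that every $\psi_e$ has a $\phi$-index at most $f(e)$. Set $f^{-1}(n):=\min\{e : f(e)\ge n\}$; this is computable, nondecreasing, and tends to infinity. My candidate will be
\[
g(n):=\max\bigl(0,\,\lfloor\log f^{-1}(n)\rfloor-c\bigr)
\]
for an absolute constant $c$ absorbing the $O(1)$ terms below. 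This $g$ is plainly a computable order, so the content of the lemma reduces to the inequality $\log f^{-1}(x)\le C(x)+O(1)$ for $x\in\MIN_\phi$.

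I will prove this by bracketing $\log e^{\ast}$, where $e^{\ast}$ denotes the smallest $\psi$-index of the function $\phi_x$. The lower bound is immediate: applying the translation inequality at $e^{\ast}$ together with $x=\min_\phi(x)$ gives $x\le f(e^{\ast})$, so $e^{\ast}\ge f^{-1}(x)$ and $\log e^{\ast}\ge\log f^{-1}(x)$. For the upper bound, I take a shortest $U$-program $p_x$ for $x$ (length $C(x)$) and wrap it in a fixed constant-size routine that recovers $x$ and then simulates the universal machine for $\phi$ on input $(x,\cdot)$. The wrapped program has length $C(x)+O(1)$ and computes $\phi_x$, so, interpreted as an index in an optimal universal numbering $V'$, it has numerical value at most $2^{C(x)+O(1)}$; equivalently, the smallest $V'$-index of $\phi_x$ has logarithm at most $C(x)+O(1)$. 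The Kolmogorov property of $\psi$ provides a linear translator from $V'$-indices to $\psi$-indices, so $\log e^{\ast}\le C(x)+O(1)$ as well. Chaining yields $\log f^{-1}(x)\le C(x)+O(1)$, completing the proof once $c$ is chosen to absorb the additive $O(1)$.

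I expect the main technical subtlety to be the upper bound $\log e^{\ast}\le C(x)+O(1)$. A naive $s$-$m$-$n$ translation into a merely acceptable $\psi$ would produce some $\psi$-index $h(x)$ of $\phi_x$ of numerical value typically polynomial in $x$, giving only $\log e^{\ast}=O(\log x)$; this is not enough, because $C(x)$ can be strictly smaller than $\log x$. It is precisely the Kolmogorov property---linear translation from \emph{any} numbering, including the optimal universal numbering underlying $C$---that lets one convert a short description of the function $\phi_x$ into a numerically small $\psi$-index, rather than only a $\psi$-index of small Kolmogorov complexity.
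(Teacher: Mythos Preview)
Your argument is correct, and its core coincides with the paper's: a short $U$-description of $x$ can be turned into a numerically small index for the function $\phi_x$, which minimality of $x$ then forbids. The difference is that you route the argument through an auxiliary Kolmogorov numbering $\psi$, introducing the intermediate quantity $e^\ast$ and a second numbering $V'$ so as to obtain $\log e^\ast \le C(x)+O(1)$ via the Kolmogorov property. The paper dispenses with all of this by taking $\psi_q := \phi_{U(q)}$; then for any $p$ with $U(p)=x$ one already has $\psi_p = \phi_x$ outright, so computable boundedness of $\phi$ applied to \emph{this} $\psi$ immediately gives a $\phi$-index for $\phi_x$ bounded by $s(p)$, and minimality forces $x\le s(p)$. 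Thus the step you flagged as the ``main technical subtlety''---invoking the Kolmogorov property to bound $\log e^\ast$---is in fact unnecessary: by choosing $\psi$ cleverly rather than generically, that bound becomes immediate and no optimality hypothesis on $\psi$ is needed at all.
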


\begin{proof}
Let $U$ be the underlying universal machine for the Kolmogorov complexity function $C$.  Define a numbering $\psi$ by
\[
\psi_q = \phi_{U(q)},
\]
let $t$ be a ``translator" program such $\phi_{t(q)} = \psi_{q}$, and let $s$ a computable function such that $t(q) \leq s(q)$ for all $q$.  Let $g$ be the computable function defined as follows: $g(x)$ is the smallest $y$ such that $s(y) \geq x$. 

Now let $x \in \MIN_\phi$ and suppose that $C(x) < g(x)$. This means that there exists $p < g(x)$ such that $U(p)=x$.
Note that $t(p) \leq s(p) < x$ and  $\phi_{t(p)} = \psi_p = \phi_{U(p)} = \phi_x$.   This contradicts that $x$ is a minimal $\phi$-index. 
\end{proof}

In the case of numberings with the Kolmogorov property we obtain better bounds.
\begin{lemma}\label{l:highmin}
If $\phi$ has the Kolmogorov property, then for all $x \in \MIN_\phi$ it holds that $C(x) \geq \size{x} - \log \size{x}$.
\end{lemma}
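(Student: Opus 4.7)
The plan is to mimic the construction in Lemma~\ref{lem:okra}, but this time exploit the fact that the translation function provided by the Kolmogorov property is \emph{linearly} bounded rather than just computably bounded. This tighter bound on translation will directly yield the tighter complexity bound $\size{x} - \log\size{x}$.

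First, let $U$ be the universal machine underlying our Kolmogorov complexity function~$C$, and define the numbering $\psi$ by $\psi_q = \phi_{U(q)}$. Because $\phi$ has the Kolmogorov property, there exists a constant $c$ such that for every index $q$ we can find some $j \leq cq + c$ with $\phi_j = \psi_q = \phi_{U(q)}$.

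Now suppose for contradiction that some $x \in \MIN_\phi$ satisfies $C(x) < \size{x} - \log\size{x}$. Take a shortest program $p$ with $U(p) = x$, so $\size{p} < \size{x} - \log\size{x}$, equivalently $p < x/\size{x}$. Applying the translation bound to $q = p$ produces an index $j$ with
\[
j \leq cp + c < \frac{cx}{\size{x}} + c.
\]
For $x$ large enough (i.e.\ $\size{x} > 2c$, say), this gives $j < x$, while $\phi_j = \phi_{U(p)} = \phi_x$, contradicting $x \in \MIN_\phi$. The finitely many small exceptional indices are absorbed by the implicit additive constant in the statement.

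The proof is essentially routine once one has the right numbering $\psi$ in mind; the main conceptual point — which is also the only place where any care is needed — is to check that a saving of a factor $\size{x}$ in the length of the program translates, under a linearly bounded translation function, into the required saving of $\log\size{x}$ bits in the index itself. The argument really has no obstacle to speak of; it is the linear (as opposed to merely computable) translation that is doing all the work.
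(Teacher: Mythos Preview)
Your proof is correct and essentially identical to the paper's: both define the auxiliary numbering $\psi_q = \phi_{U(q)}$, invoke the linear translation bound from the Kolmogorov property, and derive a smaller index computing $\phi_x$. One quibble: the lemma as stated has no additive constant, so your appeal to an ``implicit additive constant'' to handle small $x$ is a slight fudge---but the paper's own proof silently assumes $m$ is large enough for $c\cdot 2^{m-\log m}+c < 2^{m-1}$ to hold, so you are in good company.
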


\begin{proof}
Let $m = \size{x}$. Since $x$ is written on $m$ bits, we have $x \geq 2^{m-1}$. Suppose $C(x) < m - \log m$. Then there exists $p$ of length less than $m - \log m$ such that $U(p)=x$. Define a numbering $\psi$ by
\[
\psi_q = \phi_{U(q)}.
\]
Since $\phi$ has the Kolmogorov property, there exists a constant $c$ such that for some
\[
v < cp + c \leq c 2^{m - \log m} + c < 2^{m-1} \leq x
\]
it holds that $\phi_v = \psi_p = \phi_{U(p)} = \phi_x$. This contradicts that $x \in \MIN_\phi$.
\end{proof}

\subsection{Sets of low complexity with high density of minimal indices}
\label{s:highdensity}
Among the strings of a given length, a large fraction are incompressible or close to incompressible. We show here that for numberings with Kolmogorov property and for a generalization of such numberings, which we call \emph{polynomially-bounded numberings}, minimal indices have a similar property.  As we shall see, there exist infinitely many finite sets of low complexity whose vast majority of elements are minimal indices.  Such families of sets will be used several times in this work.

\begin{definition}
\label{d:polynumbering}
A numbering $\phi$ is called \emph{polynomially bounded} if for any further numbering $\psi$, there exist positive integers $k$ and $c$ such that for any $e$, $\phi_j = \psi_e$ for some $j \leq ce^k + c$.
\end{definition}

We note that a numbering which is polynomially bounded with parameter $k=1$ has the Kolmogorov property and that only the case $k=1$ is used  in the rest of the  paper.  The following ``one-dimensional'' version of $\MIN_\phi$ will be used throughout our paper.

\begin{definition}
For every numbering $\phi$, we define the following subset of $\phi$-minimal indices.
\[
M_\phi = \{e : e \in \MIN_\phi \text{ and $\phi_e$ converges only on input~0}\}.
\]
\end{definition}

The following lemma estimates the fraction of indices which are minimal in a large interval.

\begin{lemma}
\label{l:polyintervalbinary}
Let $\phi$ be a polynomially bounded numbering with degree $p$, and let $d(k) = p^{k-1} + p^{k-2} + \dotsb + 1$.  Then there exists a positive integer $a$ such that for every sufficiently large $n$ the interval $I_n =\{2^{ad(n)}+1, \dotsc, 2^{ad(n+1)}\}$ satisfies
\[
\size{M_\phi \cap I_n} > 2^{-ap^{n}}\cdot \size{I_n}.
\]
When $\phi$ has the Kolmogorov property, that is when $p=1$, $M_\phi \cap I_n$ occupies at least constant fraction of the indices in $I_n$.
\end{lemma}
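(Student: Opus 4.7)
The plan is to exhibit an infinite family of pairwise distinct partial recursive functions, each converging only on input $0$, that forces sufficiently many minimal $\phi$-indices into every initial segment of $\N$, and then to compare the growth of this count against the upper endpoint $2^{a d(n)}$ of the previous interval.

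First I would introduce the \emph{singleton} functions $f_y$ defined by $f_y(0) = y$ and $f_y(x)\diverge$ for $x \geq 1$, and form the auxiliary numbering $\psi$ with $\psi_y = f_y$. Applying the polynomial-boundedness of $\phi$ to $\psi$ supplies a constant $c$ (together with the degree $p$) such that $\min_\phi(f_y) \leq c y^p + c$ for every $y$. Since distinct values of $y$ yield distinct $f_y$'s, all of which converge only on input $0$, these minimal $\phi$-indices are pairwise distinct elements of $M_\phi$.

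Next I would count. Set $Y_n = \lfloor (2^{a d(n+1)}/(2c))^{1/p} \rfloor$, so that for every $y \leq Y_n$ one has $\min_\phi(f_y) \leq 2^{a d(n+1)}$ (valid once $n$ is large enough to make $2^{a d(n+1)}/(2c) \geq 1$). Hence $\size{M_\phi \cap [0, 2^{a d(n+1)}]} \geq Y_n$. Subtracting the trivial bound $\size{M_\phi \cap [0, 2^{a d(n)}]} \leq 2^{a d(n)}$ gives
\[
\size{M_\phi \cap I_n} \geq Y_n - 2^{a d(n)}.
\]
The calibration now rests on the arithmetic identity $d(n+1) = p\cdot d(n) + 1$, which turns $a d(n+1)/p - a d(n)$ into the constant $a/p$. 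Choosing $a$ large enough that $2^{a/p} \geq 2 (2c)^{1/p}$ (for instance $a > p + \log(2c)$) forces $Y_n \geq 2 \cdot 2^{a d(n)}$ for all large $n$, and therefore $\size{M_\phi \cap I_n} \geq 2^{a d(n)}$. Using the same identity once more, $d(n+1) - p^n = d(n)$, so $2^{-a p^n}\size{I_n} \leq 2^{-a p^n} \cdot 2^{a d(n+1)} = 2^{a d(n)}$, which yields the claimed inequality (strictness can be obtained by enlarging $a$ by $1$).

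In the Kolmogorov-property case $p=1$, the interval $I_n$ has size $2^{a n}(2^a - 1)$ while the bound just derived gives $\size{M_\phi \cap I_n} \geq 2^{a n}$, i.e.\ a fraction at least $1/(2^a - 1)$ of $\size{I_n}$. The main obstacle I anticipate is simply calibrating $a$ against $p$ via the recursion $d(n+1) = p\cdot d(n) + 1$; everything else reduces to routine manipulation. One small subtlety is that the constant $c$ from polynomial-boundedness depends on $\psi$, so $a$ has to be chosen only after $\psi$ has been fixed, not universally in advance.
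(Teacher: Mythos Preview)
Your proposal is correct and follows essentially the same approach as the paper: both introduce the singleton numbering $\psi_y(0)=y$, use the polynomial bound to push the minimal $\phi$-indices of $\psi_0,\dots,\psi_N$ below $2^{ad(n+1)}$, and then subtract the size of the initial segment $[0,2^{ad(n)}]$ via the identity $d(n+1)=p\,d(n)+1$. The paper's arithmetic is slightly more direct---it fixes $a=p+\lceil\log c\rceil$ and counts the $2^{ad(n)+1}+1$ singleton functions explicitly rather than introducing the floor $Y_n$---but the idea and the resulting bound $\size{M_\phi\cap I_n}\geq 2^{ad(n)}$ are the same; your parenthetical about enlarging $a$ for strictness is unnecessary since $\size{I_n}<2^{ad(n+1)}$ already gives $2^{-ap^n}\size{I_n}<2^{ad(n)}$.
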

\begin{proof}
Let $\psi_0, \psi_1, \ldots$ be the following numbering of all p.c.\ functions which are only defined at~0: $\psi_x(0) = x$, and $\psi_x(y)\uparrow$ for all $y \geq 1$.  Let $c$ be the positive constant guaranteed by the degree~$p$ polynomial bound so that for any index~$e \geq 1$, there exists $j<ce^p$ such that $\phi_j = \psi_e$ and also $\psi_0 = \phi_j$ for some $j \leq c$.  Fix $a=p + \lceil \log c \rceil$, and let $I_n = \{2^{ad(n)}+1, \dotsc, 2^{ad(n+1)}\}$.  Each of the functions $\psi_0, \psi_1, \ldots, \psi_{2^{ad(n)+1}}$ has a minimal $\phi$-index bounded by 
\[
c (2^{ad(n)+1})^p = c 2^p 2^{p\cdot a d(n)} \leq 2^a 2^{a\cdot p d(n)} = 2^{ad(n+1)} = \max I_n.
\]
It follows that at least $(2^{ad(n)+1}+1) - \min I_n = 2^{ad(n)}$ of these minimal $\phi$-indices must lie in $I_n$.  Since
\[
\frac{\size{M_\phi \cap I_n}}{\size{I_n}} \geq \frac{2^{ad(n)}}{2^{ad(n+1)} - 2^{ad(n)}} = \frac{1}{2^{ap^{n}}-1},
\]
the conclusion follows.
\end{proof}

Next we show that with a small amount of advice (constant advice in the case of the Kolmogorov property), we can shrink the intervals from the previous lemma so that the resulting set contains a high concentration of minimal indices.

\begin{lemma}[polynomial density-boosting] \label{lem:pdl}
Let $\phi$ be a polynomially bounded numbering with degree~$p$, and let
\[
I_n =\{2^{ad(n)}+1, \dotsc, 2^{ad(n+1)}\}
\]
as in Lemma~\ref{l:polyintervalbinary}.  Then for every $\epsilon >0$ and $n \geq 0$, there exists a subset $A_n \subseteq I_n$ such that
\begin{enumerate}[\scshape (i)]
\item $\size{M_\phi \cap A_n} \geq (1-\epsilon) \cdot \size{A_n}$,

\item $C(A_n \mid n) \leq O[p^{n} + \log (1/\epsilon)]$, and

\item $\size{A_n} = \Omega(2^{ad(n)})$.
\end{enumerate}
In case $\phi$ has the Kolmogorov property, we can replace \textsc{(iii)} with $\size{A_n} = \Omega(\size{I_n})$.
\end{lemma}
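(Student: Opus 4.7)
The plan is to approximate $M_\phi \cap I_n$ by a stage-wise computable sequence $\{A^{(s)}_n\}_s$ whose elementwise limit is exactly $M_\phi \cap I_n$, and then to use a carefully bounded amount of advice to pick out a stage $s^*$ at which $A^{(s^*)}_n$ is already quantitatively close to this limit. The starting point is Lemma~\ref{l:polyintervalbinary}, from which $N := \size{M_\phi \cap I_n} \geq 2^{ad(n)}$ and, in the Kolmogorov case, $N = \Omega(\size{I_n})$.

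For the approximation I would declare $e \in A^{(s)}_n$ iff at stage $s$: (a) $\phi_{e,s}(0)\converge$; (b) $\phi_{e,s}(y)\diverge$ for every $y\in[1,s]$; and (c) no $j<e$ satisfies both $\phi_{j,s}(0)\converge = \phi_{e,s}(0)$ and $\phi_{j,s}(y)\diverge$ for every $y\in[1,s]$. A short case analysis on the eventual behavior of $\phi_e$ and of each $\phi_j$ with $j<e$ shows that for each fixed $e \in I_n$, membership of $e$ in $A^{(s)}_n$ eventually stabilizes, and its stable value equals membership of $e$ in $M_\phi \cap I_n$; hence $A^{(s)}_n$ agrees with $M_\phi \cap I_n$ from some sufficiently late (but non-uniformly large) stage onward.

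With the approximation in hand I would set $A_n := A^{(s^*)}_n$ where $s^*$ is the first stage at which $\size{A^{(s^*)}_n} \in [N, N/(1-\epsilon)]$ and, over a calibrated stability window of further stages, the approximation remains unchanged. In the regime $p \geq 2$, $\log N \leq ad(n+1) = O(p^n)$, so encoding $N$ together with $O(\log(1/\epsilon))$ bits specifying $\epsilon$ and the window length fits within the $O[p^n + \log(1/\epsilon)]$ advice budget; property~\textsc{(iii)} follows from $\size{A_n} \geq N$, and \textsc{(i)} from $A_n \supseteq M_\phi \cap I_n$ combined with $\size{A_n} \leq N/(1-\epsilon)$. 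In the Kolmogorov case $p=1$ one has $\log N = \Theta(n)$, which individually exceeds the $O(1)$ budget; here the argument instead exploits that $\size{I_n}/N$ is a $\phi$-dependent constant, so the target size can be encoded via a $\phi$-dependent constant number of bits rather than by transmitting $N$ in full, and the stronger $\size{A_n} = \Omega(\size{I_n})$ then follows immediately from $N = \Omega(\size{I_n})$.

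The main obstacle is that $A^{(s)}_n$ is not monotone in $s$: a genuine element $e \in M_\phi$ can be transiently excluded by a false competitor $j<e$ whose value-agreement $\phi_j(0) = \phi_e(0)$ has already manifested but whose disqualifying convergence $\phi_j(y)\converge$ on some $y\neq 0$ has not yet surfaced. Thus waiting for $\size{A^{(s)}_n}$ to hit $N$ does not by itself guarantee $A^{(s)}_n = M_\phi \cap I_n$, and the stability-window argument must be made quantitative: I would use the polynomial-bound hypothesis on $\phi$ together with the counting in Lemma~\ref{l:polyintervalbinary} to cap the number of pending false competitors, so that the required window length is itself encodable within the allotted $O[p^n + \log(1/\epsilon)]$ bits. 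Pinning this down is the technical heart of the argument.
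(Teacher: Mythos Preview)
Your approach has a genuine gap in the ``stability window'' step, and it is not the technicality you flag but something more basic: capping the \emph{number} of pending false competitors does not cap the \emph{time} at which they resolve. A single index $j<e$ with $\phi_j(0)=\phi_e(0)$ may take an arbitrarily long time before $\phi_j$ reveals a convergence on some $y\neq 0$; the polynomial-bound hypothesis and Lemma~\ref{l:polyintervalbinary} say nothing about convergence \emph{stages}, only about how many indices are involved. Hence ``the required window length is itself encodable within $O[p^n+\log(1/\epsilon)]$ bits'' is unsupported: the window length is a running time, and its Kolmogorov complexity is not controlled by any of the quantities you have access to. Relatedly, your derivation of \textsc{(i)} assumes $A_n\supseteq M_\phi\cap I_n$, but your own obstacle paragraph correctly notes that genuine elements of $M_\phi$ can be transiently excluded at stage $s^*$; so that containment is exactly what the stability window was supposed to buy, and without it \textsc{(i)} does not follow from the size condition alone. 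Even your $p=1$ patch (encoding the target size via the constant ratio $\size{I_n}/N$) only addresses the size advice, not the stability advice.

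The paper avoids stability altogether. It partitions the relevant indices into three sets $X=M_\phi\cap I_n$, $Y$ (indices in $I_n$ halting only on $0$ but not minimal), and $Z$ (indices up to $\max I_n$ halting on $0$ and on some other input), and passes as advice the \emph{top $c$ bits} of each of $\size{X},\size{Y},\size{Z}$ where $c=O(ap^n+\log(1/\epsilon))$. With these truncated counts one simply \emph{enumerates} $t(Z)$ elements of $Z$, then $t(Y)$ apparently-non-minimal elements relative to the enumerated $Z'$, then $t(X)-2^{r+1}$ remaining elements halting on $0$; no waiting for stabilization is needed, and a direct error analysis bounds the number of misclassified elements in the final set by $O(2^{m-c})$. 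The essential idea you are missing is that three approximate \emph{cardinalities} suffice to drive a pure enumeration, replacing any reference to stages or stability.
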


\begin{proof}
By Lemma~\ref{l:polyintervalbinary}, we already know that a small fraction of programs in $I_n$ belong to $M_\phi$, however we wish to obtain a much higher density of minimal indices.  We whittle down the interval $I_n$ so that in the end we are left with a subset $A_n \subseteq I_n$  where at least $(1-\epsilon)\size{A_n}$ elements belong to $M_\phi$.   In addition to the number~$n$, our elimination process will use $O[ap^{n} + \log 1/\epsilon)]$ many bits of non-uniform advice, which will imply that $C(A_n \mid n) = O[p^{n} + \log (1/\epsilon)]$.

Let us see how to obtain $A_n$ with the above properties.  We let
\begin{eqnarray*}
X &=&\text{$M_\phi \cap I_n$,}\\
Y &=&\text{programs in $I_n$ that halt only on input $0$ but are not minimal, and}\\
Z &=&\text{programs in $\bigcup_{i \leq n} I_i$ that halt on input $0$ and at least one other input.}
\end{eqnarray*}
Note that the sizes of $X$, $Y$, and $Z$ are less than $2^m$, where $m =ad(n+1)+1$, so we can write the size of each of these sets in binary on exactly $m$ bits. Let $c \geq ap^{n} + 1$ be a function of $n$ and $\epsilon$ that will be specified later, and let $r = m-c$. Let $t(X), t(Y), t(Z)$ be the truncations of $\size{X}, \size{Y}, \size{Z}$ obtained by retaining the first $c$ most significants bits of the respective binary representation and filling the remaining $r$ bits with $0$'s. Since$\size{X} \geq 2^{-ap^{n}}\size{I_n}$ and  $c \geq ap^{n}+1$, $t(X)$ is not $0$ (on the other hand, $t(Y)$ and $t(Z)$ may be $0$).

Given $n$, the values of $t(X), t(Y)$, and $t(Z)$ can be represented using $3c$ bits, and we assume that we are given this non-uniform information.  Next we build in order the sets $Z', Y'$, and $X'$, which ideally should be $Z, Y$ and respectively $X$, but in fact are just approximations of these sets.

\begin{description}
\item{{\it Step 1} (Construction of $Z'$):}

We enumerate $t(Z)$ elements of $Z$, and these elements make the set $Z'$.

Note that $Z' \subseteq Z$ and there are at most $2^{r+1}$ elements in $Z - Z'$.

\item{{\it Step 2}  (Construction of $Y'$):}

We enumerate $t(Y)$ elements $e$ of $I_n - Z'$ such that
\begin{itemize}
\item $\phi_e(0)$ halts, and
\item $\exists j < e, j \not \in Z'$ such that $\phi_j(0) = \phi_e(0)$.
\end{itemize}

The enumerated elements make the set $Y'$.

If $Z'$ would be exactly equal to $Z$, then $Y'$ would be a subset of $Y$. However, since $Z'$ is not necessarily $Z$, two kind of mistakes can happen.

\begin{itemize}
\item The first kind of mistake occurs when the enumerated index $e$ belongs to $Z - Z'$.  This type of mistake can happen for at most
$2^{r+1}$ elements.

\item The second type of mistake occurs when the $j$ that witnesses the second requirement belongs to $Z - Z'$.  However such a $j$ can cause
at most one program $e$ to be incorrectly considered of type $Y$, when in fact it is of type $X$.
\end{itemize}

Thus in $Y'$, we have:
\begin{itemize}
\item at least $t(Y) - 2 \cdot 2^{r+1}$ programs of type $Y$, 
\item at most $2^{r+1}$ programs of type $Z$ with the first kind of mistake, and
\item at most $2^{r+1}$ programs of type $X$  with the second kind of mistake.
\end{itemize}

\item{{\it Step 3}  (Construction of $X'$):}

We obtain the set $X'$ by enumerating the first $t(X) - 2^{r+1}$ programs that halt on input~$0$ and are not in $Z'$ or in $Y'$.  First note that all programs in~$X$, except those that entered $Y'$ by a second type of mistake, compete in this enumeration.  Therefore the enumeration will eventually collect $t(X) - 2^{r+1}$ elements.  Again if $Z'$ and $Y'$ would be exactly $Z$ and respectively $Y$, then all the enumerated elements would be from $X$.
Since this is not necessarily the case, two types of  mistakes  may happen.

\begin{itemize}
\item The first type of mistake occurs when a program in $Z-Z'$ is enumerated into $X'$. There are at most $2^{r+1}$ such mistakes.

\item  The second type of mistake occurs when a program in $Y - Y'$ is enumerated into $X'$. There are at most $2^{r+1}$ such mistakes.
\end{itemize}
So $X'$ has $t(X) - 2^{r+1}$ elements and except for at most $2 \cdot  2^{r+1}$ many elements, all of the elements in $X'$ belong to $X$.
\end{description}

Note that
\[
\frac{\size{I_n}}{2^m} = \frac{2^{ad(n+1)} - 2^{ad(n)}}{2^{ad(n+1)+1}} = \frac{2^{ap^{n}} - 1}{2^{ap^{n}+1}},
\]
whence by Lemma~\ref{l:polyintervalbinary},
\[
\size{X} 
\geq 2^{-ap^{n}} \cdot \size{I_n} 
= 2^m \cdot \frac{2^{ap^{n}}-1}{2^{ap^{n}}} \cdot \frac{1}{2^{ap^{n}+1}} 
\geq \frac{2^m}{2^{ap^{n}+2}}.
\]
We take $A_n$ to be $X'$, so
\[
\size{M_\phi \cap A_n}  
\geq \size{X'} - 2 \cdot 2^{r+1} 
\geq t(X) - 3 \cdot  2^{r+1},
\]
and therefore
\begin{multline*}
\frac{\size{M_\phi \cap A_n}}{\size{A_n}}  
 \geq 1 - \frac{2^{r+2}}{t(X) - 2^{r+1}}
 \geq 1 - \frac{2^{m - c+2}}{\size{X}}\\
 \geq 1 - \frac{2^{m - c+2}}{2^{m} \cdot 2^{-(ap^{n}+2)}}
 = 1- \frac{1}{2^{c-ap^{n}-4}}.
\end{multline*}
For $c= \lceil \log (1/\epsilon) \rceil + ap^{n} + 5$, the last term is greater than $1-\epsilon$.  Since $A_n$ can be constructed from $n$ and the $3c$ bits that encode $t(X), t(Y), t(Z)$, it follows that $C(A_n \mid n) \leq 3 \log (1/\epsilon) + 3ap^{n} + O(1)$.  Finally,
\begin{multline*}
\size{A_n}  =  t(X) - 2^{r+1}
 \geq (\size{X}-2^{r+1}) - 2^{r+1} \\
 \geq 2^{m-ap^{n+1}-2} - 2^{(m-c)+2}
 \geq 2^m \cdot 2^{-ap^{n}-3}
 = \Omega(2^{ad(n)}).
\end{multline*}
When $\phi$ is a numbering with the Kolmogorov property, $p=1$ and therefore $2^{-ap^{n}}$ is a constant. In this case the last inequality implies $A_n = \Omega(2^m) = O(\size{I_n})$.
\end{proof}

\begin{remark}
The complexity of $A_n$ above depends on $n$ but is small compared to the complexity of most large subsets of $I_n$. Indeed $I_n$ is an interval of integers of the form $\{M, \dotsc, M^{O(p)}\}$, where $M =2^{ad(n)}+1$ and the size of $A_n$ is at least $M$.  The complexity of $A_n$ conditioned by $n$ is $O(\log M + \log 1/\epsilon)$, while the conditional complexity of most subsets of $I_n$ of size $M$ is $\log {M^{O(p)}\choose M} = O(M \log M)$. Furthermore, $p=1$ for numberings with Kolmogorov property, and so in this case the complexity of $A_n$ conditioned by $n$ is a linear function of $\log(1/\epsilon)$.
\end{remark}

\subsection{The champion method}
\label{s:champion}

The following lemma, used in the contrapositive form, provides a sufficient criterion for a set to avoid being $(1,k)$-recursive.  While we give a direct proof in the ``warm-up'' of Section~\ref{sec:wgo}, the argument below goes by contradiction.  When interpreting the Champions~Lemma below, it is useful to keep in mind that a $(\lfloor k/2 \rfloor +1, k)$-recursive set is already recursive \cite{Tra55}.  Epstein and Levin recently discovered a related property which suffices to guarantee that sets contain elements with low Kolmogorov complexity \cite{EL12, She12}.
\begin{championlemma} \label{lem: cl-new}
Let $M$ be a set of binary strings, let $k$ and $m$ be positive integers.  Suppose that $M$ is $(m,k)$-recursive.  Then for all sufficiently large finite sets~$A$ satisfying
\begin{equation} \label{eqn: dc}
\size{M \cap A} \geq \left(1 - \frac{1}{k!(k-m+2)}\right) \cdot \size{A},
\end{equation}
there exists $x \in M \cap A$ with $C(x) \leq C(A) + O(1)$.
\end{championlemma}

\begin{proof}
Let~$M, m$ and $k$ be as in the hypothesis of the theorem, and let~$f$ be the computable  witnessing that $M$ is $(m,k)$-recursive.  Assume towards a contradiction that there exists a sufficiently large finite set $A$  satisfying the density condition~\eqref{eqn: dc} such that for all $x \in M \cap A$ we have $C(x) > C(A) + c$, where $c$ is a constant that will be specified later.  We say that  strings in $M \cap A$ are \emph{high} and strings in $A - M$ are \emph{low}.  Let us assume that $f$ maps $k$-tuples of integers to $\{\H,\L\}^k$ where the label $\H$ asserts that the string is high, and $\L$ asserts that the string is low.  To obtain a contradiction, it suffices to show that $f$ mislabels at least $k-(m-1)$ positions for some vector in $A^k$.

We first present a sketch of the proof.  Most $k$-tuples in $A^k$ consist only of high strings, and in such vectors, $f$ must label at least one position with ``$\H$.''  Henceforth, we restrict $f$ to $A^k$. We count for each string in $A$ how many $\H$ labels it receives among all tuples, with the provision that if a string receives several $\H$ tuples in the same tuple then we count only one.  Consider the lexicographically least string with the largest count of $\H$ among $k$-tuples in $A^k$. We call this string the \emph{champion}. The champion actually has low complexity and consequently, by our assumption, is not in $M \cap A$ despite the fact that there are many $k$-tuples in which it is labeled $\H$.

Let us focus on those $k$-tuples in $A^k$ where $f$ incorrectly labels the champion with $\H$.  Call this set~$E$, and mark in each tuple in $E$ one position where the champion is labeled $\H$.  We can find many such $k$-tuples in~$E$ where all the unmarked positions contain high strings.  Each such vector must have at least one $\H$ label other than the marked one, as $f$ labels at least one position correctly in each $k$-tuple. We count labels the same way as before among vectors in $E$ after ignoring the marked positions. The lexicographically least string with  the largest new count of $\H$ among vectors in $E$ is called the \emph{second place champion}.  Like the champion, the second place champion also has low complexity and therefore, by our assumption, is not in $M \cap A$.  There are still $k$-tuples where both the champion and second-place champion receive incorrectly the label $\H$ and the remaining $k-2$ unmarked positions contain high strings. By iterating this process we obtain a $k$-tuple where~$f$ mislabels each of $k - (m-1)$ positions occupied by champions with $\H$, contradicting that~$f$ always gets at least~$m$ labels correct.

We proceed with the details.  We construct sequentially some strings $x_1, x_2, ..., x_{k-(m-1)}$, the champions mentioned above, such that
\begin{enumerate}[(a)]
\item Each of the $k-(m-1)$ champions has complexity $C(x) < C(A) + c$, for a constant $c$ that will be specified shortly, and
\item for all $1\leq \ell \leq k-(m-1)$, the number of $k$-tuples containing in $\ell$ positions the champions $x_1, \ldots, x_\ell$  labeled $\H$ is at least $[1- \ell /(k-m+2)] \size{A}^{k-\ell}$.
\end{enumerate}
This gives us the contradiction described in the proof sketch because, at stage $k - (m-1)$,  it follows from (b) that there is at least one $k$-tuple containing $k-m+1$ positions occupied in some order by the champions $x_1, \ldots, x_{k-(m-1)}$ which are all labeled $\H$ even though all champions have low complexity and, consequently, taking into account our assumption, they cannot be in $M$.

To start with, let us assume that we can carry out the $k-(m-1)$ iterations of the construction of champions, and let $x_1, x_2, \dotsc, x_{k-(m-1)}$ denote the champion, second place champion, and so on.  Since champion $x_\ell$ can be computed when the set $A$, the index $\ell$ and the function $f$ are given, it follows that for some constant $c$
\[
C(x_\ell) \leq C(A) + \log \ell + 2\log \log \ell + O(1) < C(A) + c
\]
for all $\ell \leq k-(m-1)$, where the ``$2\log \log \ell$'' bits are used to form a delimiter for encoding the pair $\pair{A, \ell}$.  Hence, condition (a) holds.

It remains to demonstrate that all $k-(m-1)$-place champions exist, and that condition (b) is also true.  We will implicitly assume that $\size{A}$ is sufficiently large to complete all $k-(m-1)$ stages of the construction.  Let $\epsilon = 1/[k!(k-m+2)]$. 
By the Bernoulli Inequality, there are at least
\[
(1-\epsilon)^k  \size{A}^k \geq (1-k\epsilon) \size{A}^k \geq \left(1 - \frac{1}{k-m+2}\right) \size{A}^k
\]
$k$-tuples in $A^k$ consisting of only high strings, and all these tuples contain at least one $\H$ in their label.  Consequently, the first champion can be chosen as stated in the proof sketch. Also, the number of tuples containing $x_1$ labeled $\H$ is at least $(1- 1/(k-m+2))\size{A}^{k-1}$.

We shall construct $x_{\ell+1}$ by induction.  Assume that $x_1, \ldots, x_\ell$ exist, and that there are at least $[1- \ell/(k-m+2)]\size{A}^{k -\ell}$  $k$-tuples in $A_k$ in which $x_1, \ldots, x_\ell$ appear and each receive the label $\H$. Let $E$ be the set of such $k$-tuples, and  let $W$ be those vectors in $E$ for which all the non-champion positions contain high strings.  Let $B$ be the complement of $W$ in $E$, that is the set of all $k$-tuples for which $f$ labels all the current champions $x_1, \ldots, x_\ell$ with $\H$ and there is at least one low string in the remaining positions.  We would like to find an upper upper bound for $B$, so let us set $B'$ to be all vectors in $A^k$ that contain the current champions, regardless of how they are labeled, and at least one additional low string. Then $B$ is included in $B'$, so $\size{B} \leq \size{B'}$.

Let $D$ be the set of vectors in $A^k$ which contain $x_1, \dotsc, x_\ell$ and have high strings in the remaining $(k-\ell)$ positions.   Let $d$ be the number of possible ways of placing $x_1, \dotsc, x_\ell$ into a $k$-tuple.  Now $d \leq {k \choose \ell} \ell!$, and equality holds if the champions are all distinct.  Furthermore,
\begin{align*}
\size{D} \geq d (1-\epsilon)^{k-\ell}\size{A}^{k-\ell} \geq [1-(k-\ell)\epsilon] d \size{A}^{k-\ell}
\end{align*}
because the current champions $x_1, \ldots, x_\ell$ are all low and thus we do not overcount  when we consider different positions for the $\ell$ current champions.  Now $B'$ is the set of all $k$-tuples containing $x_1, \ldots, x_\ell$  except for those in $D$, and thus
\[
\frac{\size{B'}}{d \size{A}^{k-\ell}} \leq 1 - [1-(k-\ell)\epsilon] =  (k-\ell) \epsilon.
\]

Finally, we bound the size of $W = E \setminus B$.  Note that
\begin{eqnarray*}
\size{E} - \size{B} \geq \size{E} - \size{B'}  \geq \left[1 - \frac{\ell}{k-m+2} - (k-\ell) d \epsilon\right] \size{A}^{k-\ell},\
\end{eqnarray*}
and applying our estimate for $d$, we obtain
\[
\size{W}
 \geq \left[1 - \frac{\ell}{k-m+2} -  k (k-1) \dotsc (k-\ell) \epsilon \right] \size{A}^{k-\ell} 
 \geq \left(1 - \frac{\ell+1}{k-m+2}\right) \size{A}^{k-\ell}.
\]
Therefore the $\ell+1$-place champion can be chosen as described in the proof sketch, and the number of tuples containing $x_1, \ldots, x_{\ell+1}$ all labeled $\H$ is at least $[1 - (\ell+1)/(k-m+2)] \size{A}^{k-(\ell+1)}$.
\end{proof}

We show next that the converse fails.

\begin{proposition}
For every positive integer $k$, there exists a set of natural numbers $M$ that is not $(1,k)$-recursive and yet it  satisfies the conditions of the Champions Lemma, i.e., for all finite sets of natural numbers $A$ at least one of the following holds true:
\begin{enumerate}[\scshape (i)]
\item $\size{M \cap A} < \left(1 - \frac{1}{(k+1)!}\right) \cdot \size{A}$, or
\item there exists $x \in M \cap A$ such that $C(x) <  C(A) + O(1)$.
\end{enumerate}
\end{proposition}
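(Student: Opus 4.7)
The plan is to take $M$ to be any computably enumerable set that is not $(1,k)$-recursive, and then to verify the Champions-style dichotomy directly from the c.e. property of $M$. Such an $M$ exists by a direct diagonalization: enumerate all candidates $f_e\colon\N^k\to\{0,1\}^k$, dovetailing over $e$ so that only total computable indices eventually act. For the $e$-th requirement, reserve a fresh tuple $(x_{e,1},\dotsc,x_{e,k})$ of natural numbers disjoint from all prior witnesses; once $f_e$ converges on this tuple, enumerate $x_{e,i}$ into $M$ if $f_e(x_{e,1},\dotsc,x_{e,k})_i=0$, and permanently restrain $x_{e,i}$ from $M$ otherwise. Because requirements use disjoint witnesses, the restraints never conflict with the enumerations, so $M$ is c.e.\ and every total computable $f_e$ is refuted on its reserved tuple, witnessing that $M$ is not $(1,k)$-recursive.

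Having fixed $M$, I would verify the dichotomy by cases on whether $M\cap A$ is empty. If $M\cap A=\emptyset$, then $|M\cap A|/|A|=0<1-1/(k+1)!$, so condition~(i) holds. Otherwise, consider the following fixed algorithm whose program size depends only on a c.e.\ index of $M$: on input $A$, enumerate $M$ in stages and output the first element ever enumerated that also lies in $A$. The algorithm halts precisely because $M\cap A\neq\emptyset$, and it produces some $x\in M\cap A$ as a fixed-size computable function of $A$. Consequently $C(x)\le C(A)+O(1)$, so condition~(ii) holds.

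The main obstacle is really just pinpointing a c.e., non-$(1,k)$-recursive set, but the diagonalization above is self-contained and needs no priority bookkeeping since witnesses are chosen disjointly. Once $M$ is available, the verification requires essentially no further combinatorial work: c.e.\ enumeration supplies a uniform, constant-size way to name an element of $M\cap A$ from $A$ alone whenever this intersection is nonempty, which is exactly the content needed for condition~(ii).
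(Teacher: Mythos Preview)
Your proof is correct and takes a genuinely different route from the paper's. The paper builds $M$ by interleaving short diagonalization intervals $I_n$ (of size~$k$) with long ``padding'' intervals $J_n$ on which $M$ is empty, and then argues the dichotomy by case analysis on how a given $A$ sits relative to these intervals, choosing the lengths of the $J_n$ large enough to force $C(A)$ up whenever $A$ reaches past them. Your argument instead observes that \emph{any} c.e.\ set $M$ automatically satisfies the Champions dichotomy: if $M\cap A\neq\emptyset$ then the first element of $M\cap A$ to appear in a fixed enumeration of $M$ is a computable function of $A$, so some $x\in M\cap A$ has $C(x)\le C(A)+O(1)$; and if $M\cap A=\emptyset$ then condition~(i) is vacuous. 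Hence it suffices to exhibit a c.e.\ set that is not $(1,k)$-recursive, which your disjoint-witness diagonalization does cleanly.

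What each approach buys: the paper's construction is more hands-on and makes the structure of $M$ completely explicit, but the verification of $S_A$ requires tracking how $A$ interacts with the interval decomposition. Your approach isolates the real content as a one-line lemma (``every c.e.\ set satisfies the Champions dichotomy''), reducing the proposition to the existence of a c.e.\ non-$(1,k)$-recursive set, and incidentally shows that the class of counterexamples is much larger than a single tailored $M$. This is a strictly more conceptual and shorter argument.
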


\begin{proof}
Let us fix $k$ and let $\epsilon = 1/(k+1)!$. We construct a set $M$ that satisfies for all integers $n$ and for all finite sets $A$ the following requirements:

\begin{description}
\item{$R_n$:} The $n$-th computable function $f_n$ gets all labels for M wrong on some $k$-tuple, and

\item{$S_A$:} $A$ satisfies either \textsc{(i)} or \textsc{(ii)} above.
\end{description}
To construct $M$ we partition $\N$ into consecutive intervals $I_0, J_0, I_1, J_1, \dotsc,\linebreak[0] I_n, J_n, \dotsc$.  Each $I_n$ is an interval with exactly $k$ elements which is used to diagonalize against $f_n$ in the obvious way: $f_n$ assigns labels to the elements of $I_n$, and we define $M$ in the opposite way so that all labels are incorrect. This ensures that $M$ is not $(1,k)$-recursive.

We now define the intervals $J_n$ to satisfy the requirements $S_A$.  These intervals will satisfy $M \cap J_n = \emptyset$ for all $n$.  More precisely, $J_n$ will witness that $S_A$ is satisfied for the finite sets $A$ where $|A| > k/(1-\epsilon)$ and $n$ is the least index such that $A \cap I_n \neq  \emptyset$.  We say that such sets $A$ form the \emph{target} of $J_n$.

If $t=\max I_n$, we take $J_n = \{t+1, \ldots, t+m\}$ where $m$ is large enough so that for all $A$ in the target of $J_n$, if $A$ contains an index greater than $t+m$ then
\[
C(A) > \max \{C(x) \colon x \in I_0 \cup \ldots \cup I_n\}.
\]
On the other hand, if $A$ is in the target of $J_n$ and $\max A \leq t +m$, then
\[
\size{M \cap A} \leq \size{I_n \cap A} \leq k < (1-\epsilon)\size{A}.
\]
Thus $S_A$ is indeed satisfied for all $A$ in the target of $J_n$. 

It only remains to satisfy $S_A$ for sets $A$ with $\size{A} \leq k/(1-\epsilon)$. This is easy because any element $x$ in such a set can be described with $C(A) + O(1)$ bits, where the $O(1)$ bits are used to represent the rank of $x$ in some canonical representation of $A$.
\end{proof}

We present the following user-friendly version of the Champions Lemma.

\begin{theorem} \label{thm:nonapprox}
Let $M$ be a set of binary strings.  Suppose there exist  a sequence of distinct finite sets $A_0, A_1, \dotsc$ and a sequence of positive reals $\epsilon_0, \epsilon_1 \dotsc$ with limit 0 such that
\begin{enumerate}[\scshape (i)]
\item $|M \cap A_n| \geq \left(1 - \epsilon_n\right) \cdot \size{A_n}$, and
\item for all $x \in M \cap A_n$, $C(x) \geq C(A_n) + \log [C(A_n)]$.
\end{enumerate}
Then $M$ is not $(1,k)$-recursive for any $k$.
\end{theorem}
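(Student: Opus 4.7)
The plan is to argue by contradiction using the Champions Lemma with $m=1$. Suppose toward contradiction that $M$ is $(1,k)$-recursive for some positive integer $k$. The Champions Lemma then furnishes a constant $c$ (depending only on $k$) such that for every sufficiently large finite set $A$ satisfying $\size{M \cap A} \geq \bigl(1 - 1/[k!(k+1)]\bigr)\size{A}$, some $x \in M \cap A$ has $C(x) \leq C(A) + c$.

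To apply this to the sequence $(A_n)$, I would first use $\epsilon_n \to 0$ to choose $N$ so that $\epsilon_n < 1/[k!(k+1)]$ for all $n \geq N$, which by hypothesis \textsc{(i)} supplies the required density. Next I would observe that since the $A_n$ are distinct finite sets, their canonical binary encodings are distinct strings, and since only finitely many strings have Kolmogorov complexity below any fixed bound, necessarily $C(A_n) \to \infty$. Passing to a subsequence if necessary to make the cardinalities $\size{A_n}$ large enough for the counting steps inside the proof of the Champions Lemma, for all such $n$ the lemma yields some $x_n \in M \cap A_n$ with $C(x_n) \leq C(A_n) + c$.

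Finally, combining this upper bound with hypothesis \textsc{(ii)}, which asserts $C(x_n) \geq C(A_n) + \log C(A_n)$, yields $\log C(A_n) \leq c$, so $C(A_n) \leq 2^c$ for infinitely many $n$, contradicting $C(A_n) \to \infty$. The main mild obstacle I foresee is ensuring the ``sufficiently large'' clause of the Champions Lemma is met along the sequence $A_n$; distinctness of the $A_n$ together with the forced divergence of their complexities handles this after thinning the sequence if needed, so the rest of the argument is a direct combination of the two bounds.
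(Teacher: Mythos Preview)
Your argument is correct and follows exactly the paper's approach: apply the Champions Lemma with $m=1$ in contrapositive, use $\epsilon_n \to 0$ to meet the density threshold $1/[k!(k+1)]$, and contradict hypothesis \textsc{(ii)} via $C(A_n) \to \infty$. One small correction to your justification of the ``sufficiently large'' clause: distinctness together with $C(A_n)\to\infty$ does \emph{not} by itself force $\limsup|A_n|=\infty$ (consider singletons $A_n=\{a_n\}$ with $a_n$ random); rather, since any $x\in A_n$ satisfies $C(x)\le C(A_n)+\log|A_n|+O(1)$, hypothesis \textsc{(ii)} itself yields $\log|A_n|\ge \log C(A_n)-O(1)\to\infty$, which is what you need---the paper also glosses over this, simply asserting $\limsup|A_n|=\infty$.
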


\begin{proof}
Choose $\epsilon_k = 1/[k!(k+1)]$, and apply the Champions~Lemma in the contrapositive form.  Note that the ``$O(1)$'' term in the Champions~Lemma depends of the constant $k$, but since $\limsup \size{A_n} = \infty$, we obtain for each~$k$ an infinite sequence of finite sets, each with the requisite density and complexity.
\end{proof}

\section{Non-approximability of $\MIN_\phi$}
\label{s:minset}

We first show that the set of Kolmogorov random strings, $\{x \colon C(x) \geq \size{x}\}$ is not $(1,k)$-recursive for any~$k$.
For an arbitrary function $g$, let $\HIGH_g = \{x : C(x) \geq g(\size{x})\}$.
\begin{theorem} \label{t:random}  \label{thm: howfatisshe}
Let $g$ be a computable function such that both $g(n)$ and $n-g(n)$ are unbounded, and let $c$ be a nonnegative constant.  Then
for any positive integer $k$, 
\begin{enumerate}[\scshape (i)]
\item $\HIGH_{g}$ is not $(1,k)$-recursive, and

\item $\HIGH_{n-c}$ is not $(1,k)$-recursive.
\end{enumerate}
\end{theorem}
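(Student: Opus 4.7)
The overall plan is to apply Theorem~\ref{thm:nonapprox}: for each part it suffices to produce a sequence of distinct finite sets $A_0,A_1,\ldots$ and positive reals $\epsilon_n\to 0$ such that (a) the target set occupies at least a $(1-\epsilon_n)$ fraction of $A_n$, and (b) every element of the target set lying inside $A_n$ has Kolmogorov complexity at least $C(A_n)+\log C(A_n)$. The sets $A_n$ will be obtained by a density-boosting construction modeled on Lemma~\ref{lem:pdl} transplanted to the Kolmogorov-complexity setting: starting from $\{0,1\}^n$, one computably removes a large fraction of the strings of complexity below a chosen threshold using only a logarithmic amount of nonuniform advice (the top bits of the count of low-complexity strings).

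For part (ii), fix $c$ and let $L_n=\{x\in\{0,1\}^n:C(x)<n-c\}$, so $|L_n|<2^{n-c}$. Write $|L_n|$ in binary on $n-c$ bits and let $t_n$ be the integer formed by its top $b_n:=\lfloor\log n\rfloor$ bits. Given $n$ and $t_n$ as advice ($O(\log n)$ bits in total), enumerate $t_n\cdot 2^{n-c-b_n}$ elements of $L_n$ by running programs of length less than $n-c$ until enough halt; call this set $L'_n$ and put $A_n=\{0,1\}^n\setminus L'_n$. Then $|L_n\setminus L'_n|\leq 2^{n-c-b_n}$, so the density of $\HIGH_{n-c}$ in $A_n$ is at least $1-O(2^{-c-b_n})\to 1$; meanwhile $A_n$ is determined by $n$, $c$, $b_n$, and $t_n$, giving $C(A_n)=O(\log n)$, whereas every $x\in\HIGH_{n-c}\cap A_n$ has $C(x)\geq n-c$, which dominates $C(A_n)+\log C(A_n)$ for large $n$. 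Theorem~\ref{thm:nonapprox} then yields that $\HIGH_{n-c}$ is not $(1,k)$-recursive for any $k$.

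Part (i) proceeds by the same construction with threshold $g(\ell)$ replacing $n-c$, applied along a subsequence of lengths $\ell_n$ on which both $g(\ell_n)$ and $\ell_n-g(\ell_n)$ are large; the unboundedness hypotheses on $g$ and $n-g$, combined with the computability of $g$, allow such a subsequence to be located effectively. The density bound becomes $1-O(2^{g(\ell_n)-b_n-\ell_n})\to 1$, and the complexity condition reduces to $g(\ell_n)\geq C(A_n)+\log C(A_n)=O(\log\ell_n+b_n)$. The step I expect to be the main obstacle is meeting precisely this inequality: the description length of $A_n$ carries an unavoidable $\Omega(\log\ell_n)$ term, so the subsequence $\ell_n$ and the advice budget $b_n$ must be tuned so that $g(\ell_n)$ beats $\log\ell_n$ plus the advice. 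For $g$ growing at least logarithmically this is automatic; very slow $g$ requires more delicate parameter choices—exactly the balancing of length, advice, and threshold performed on the function side in Lemma~\ref{lem:pdl}.
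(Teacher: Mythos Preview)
Your treatment of part~\textsc{(ii)} is essentially the paper's: both enumerate an approximate count of low-complexity strings and pass to the complement. You retain fewer advice bits ($O(\log n)$ versus the paper's $n-2\log n$), which only sharpens $C(A_n)$. One gap: your density estimate $1-O(2^{-c-b_n})$ tacitly assumes $|A_n|=\Omega(2^n)$, which can fail when $c=0$ at lengths where nearly all strings are nonrandom. The paper handles this by first arguing (via a counting argument comparing lengths $n$ and $n{+}1$) that infinitely many $n$ have at least $2^{n-2}$ random strings, and restricts to those lengths.

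For part~\textsc{(i)} the paper takes a simpler and genuinely different route, and your sketch has a real gap. No density boosting is needed: with $A_n=\{0,1\}^n$ the fraction in $\HIGH_g$ is already $1-2^{g(n)-n+O(1)}$, which suffices since $n-g(n)$ is unbounded. The complexity condition is handled not by tuning a subsequence but by the Zvonkin--Levin theorem that no unbounded computable function lower-bounds $C$: infinitely often $C(n)\le g(n)/2$, whence $C(A_n)\le g(n)/2+O(1)$ while every $x\in\HIGH_g\cap A_n$ has $C(x)\ge g(n)$. This is exactly the idea you are missing. Your declared obstacle---that $C(A_n)$ ``carries an unavoidable $\Omega(\log\ell_n)$ term''---is based on a miscount, and the Lemma~\ref{lem:pdl}-style balancing you invoke only controls $C(A_n\mid n)$, not the cost of naming the length; Zvonkin--Levin is what drives that cost below $g$ regardless of how slowly $g$ grows.
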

\begin{proof}
(\textsc{i}): We take $A_n$ to be set of strings of length $n$. It holds, by counting the maximum possible number of relevant descriptions, that $\size{\HIGH_g \cap A_n} \geq (1- 2^{-(n-g(n)-1)})\size{A_n}$. The complexity $C(A_n)$ is bounded by $C(n) + O(1)$, which for infinitely many $n$ is at most $g(n)/2$ because no unbounded, computable function is a lower bound for Kolomogorov complexity \cite{LV08, ZL70}. Finally, for every string $x$ in $\HIGH_g \cap A_n$, we have
\[
C(x) \geq g(n) > g(n)/2 + \log [g(n)/2].
\]
The conditions of Theorem~\ref{thm:nonapprox} are satisfied for $M = \HIGH_g$, and the conclusion follows.

\medskip

(\textsc{ii}): Since the set of $n$-bit strings not in $\HIGH_{n-c}$ do not form a vanishing fraction of all $n$-bit strings, part~(\textsc{ii}) requires a more elaborate analysis. Let us focus on the case $c=0$, that is, the case of \emph{random strings}.  For positive values of $c$, the proof is similar and slightly easier.

Let $I_n$ denote the set of $2^n$ binary strings of length~$n$.  First we argue that for infinitely many lengths~$n$, at least $2^{n-2}$ strings of length~$n$ are random.  Indeed, if there were less than $2^{n-2}$ random strings of length~$n$, then at most $(2^n - 1) - (2^n - 2^{n-2}) < 2^{n-2}$ programs of length less than~$n$ could describe strings of length greater than~$n$, and the number of programs of length~$n$ that describe strings of length greater than~$n$ is at most $2^n$. Thus at least $2^{n+1} - 2^n - 2^{n-2} > 2^{n-1}$ strings of length $2^{n+1}$ are random.

Let $R$ denote the set of random strings and let $t_n$ be the number of nonrandom strings in $I_n$ encoded in binary with the last $2 \log n$ bits replaced with ``$00\dotsc 0$,'' and let $T_n$ be the first $t_n$ elements in $I_n$ found to be nonrandom.  We throw these strings out of the ``arena'' $I_n$, and what's left are mostly random strings.  Let $A_n = I_n \setminus T_n$.  Then  at most $2^{2\log n + 1} = 2n^2$ in $I_n \setminus T_n$ are nonrandom. For any $\epsilon > 0$ and all sufficiently large $n$, we have $2^{n-2} - 2n^2 \geq (1-\epsilon) 2^{n-2}$, and for those infinitely many~$n$ which are both sufficiently large and for which there are at least $2^{n-2}$ random strings of length~$n$, we have
\[
\size{R \cap A_n} \geq 2^{n-2} - 2n^2 \geq (1-\epsilon) 2^{n-2} \geq (1-\epsilon) \size{A_n},
\]
which satisfies condition~{\scshape (i)} of Theorem~\ref{thm:nonapprox}.  Furthermore,
\[
C(A_n) \leq C(T_n) + O(1) \leq n - 2 \log n + O(1),
\]
so we satisfy condition~{\scshape (ii)} as well because every string in $A_n$ is random.  The theorem follows. \phantom\qed
\end{proof}

\begin{remark}
Regardless of which underlying universal machine is used to measure Kolmogorov complexity, Theorem~\ref{thm: howfatisshe}~\textsc{(ii)} holds for at most finitely many negative values~$c$.  This follows from the fact that any string of length~$n$ can described using $n+O(1)$ bits.  
\end{remark}

\begin{theorem}
\label{t:onekrec}
For all numberings $\phi$ with Kolmogorov property, for all $k$, $\MIN_\phi$  is not $(1,k)$-recursive. 
\end{theorem}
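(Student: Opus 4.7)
The plan is to instantiate Theorem~\ref{thm:nonapprox} with $M = \MIN_\phi$, and to produce the required sequence of finite sets $A_n$ from the polynomial density-boosting lemma (Lemma~\ref{lem:pdl}) applied in its Kolmogorov-property form ($p=1$).

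First, since $\phi$ has the Kolmogorov property we have $p=1$, so in Lemma~\ref{l:polyintervalbinary} the exponent $d(n) = n$ and the intervals become $I_n = \{2^{an}+1, \dotsc, 2^{a(n+1)}\}$ for some constant~$a$. I apply Lemma~\ref{lem:pdl} with $\epsilon_n = 1/n$ to obtain sets $A_n \subseteq I_n$ satisfying $\size{M_\phi \cap A_n} \geq (1 - 1/n)\size{A_n}$, $\size{A_n} = \Omega(\size{I_n})$, and $C(A_n \mid n) \leq O(\log n)$ (since the $p^n$ term is constant and $\log(1/\epsilon_n)=\log n$). Because $M_\phi \subseteq \MIN_\phi$, the first inequality upgrades to $\size{\MIN_\phi \cap A_n} \geq (1-\epsilon_n)\size{A_n}$, which is exactly condition \textsc{(i)} of Theorem~\ref{thm:nonapprox}. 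The sets $A_n$ are distinct since the intervals $I_n$ are pairwise disjoint and $\size{A_n}\to\infty$.

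Next I verify the complexity condition \textsc{(ii)}. Every $x \in \MIN_\phi \cap A_n$ lies in $I_n$, so $\size{x} \geq an$, and by Lemma~\ref{l:highmin},
\[
C(x) \;\geq\; \size{x} - \log\size{x} \;\geq\; an - \log\bigl(a(n+1)\bigr) \;=\; an - \log n - O(1).
\]
On the other hand, combining $C(A_n \mid n) \leq O(\log n)$ with $C(n) \leq 2\log n + O(1)$ gives $C(A_n) \leq O(\log n)$, and therefore $C(A_n) + \log C(A_n) \leq O(\log n)$. Since $an$ grows linearly in $n$ while the right-hand side is only $O(\log n)$, for all sufficiently large~$n$ we have $C(x) \geq C(A_n) + \log C(A_n)$, which is condition~\textsc{(ii)}.

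With both hypotheses of Theorem~\ref{thm:nonapprox} verified, the conclusion is that $\MIN_\phi$ is not $(1,k)$-recursive for any $k$. No step presents a serious obstacle: Lemma~\ref{lem:pdl} already packages the delicate construction that produces low-complexity finite sets densely populated by minimal indices, Lemma~\ref{l:highmin} supplies the complexity lower bound for individual minimal indices, and Theorem~\ref{thm:nonapprox} turns these ingredients into the non-approximability statement via the champion method. The only care needed is in choosing $\epsilon_n \to 0$ slowly enough that $C(A_n)$ stays much smaller than $\min_{x \in \MIN_\phi \cap A_n} C(x)$, and $\epsilon_n = 1/n$ comfortably achieves this.
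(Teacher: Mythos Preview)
Your proof is correct and follows essentially the same route as the paper: invoke Lemma~\ref{lem:pdl} (with $p=1$) to produce the low-complexity sets $A_n$ densely populated by $M_\phi \subseteq \MIN_\phi$, use Lemma~\ref{l:highmin} for the individual complexity lower bound, and feed everything into Theorem~\ref{thm:nonapprox}. The only cosmetic difference is that you let $\epsilon_n = 1/n$ vary with $n$, which matches the hypothesis of Theorem~\ref{thm:nonapprox} more literally, whereas the paper fixes $\epsilon$ depending on $k$ and is implicitly appealing to the Champions Lemma for that particular $k$; both variants work.
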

\begin{proof}
Fix $k$, and let $\epsilon = 1/[k!(k-1)]$.  Apply Lemma~\ref{lem:pdl} with $p=1$ (corresponding to numberings with Kolmogorov property) to obtain sets $A_0, A_1, \dotsc$ satisfying

\begin{enumerate}[(1)]
\item $\size{M_\phi \cap A_n} \geq (1-\epsilon) \cdot \size{A_n}$,
\item $C(A_n) \leq \log n + O[\log (1/\epsilon)]$, and 
\item $\size{A_n} = \Omega(2^{\Omega(n)})$.  
\end{enumerate}
Then $\size{\MIN_\phi \cap A_n} \geq (1-\epsilon) \cdot \size{A_n}$, and by Lemma~\ref{l:highmin} every $x \in \MIN_\phi \cap A_n$ satisfies
\[
C(x) \geq \size{x} - \log \size{x} = \Omega(n)  \geq C(A_n) + \log [C(A_n)].
\]
It follows from Theorem~\ref{thm:nonapprox} that $\MIN_\phi$ is not $(1,k)$-recursive.
\end{proof}

\begin{remark}
The proof technique used in Theorem~\ref{t:onekrec} cannot be used to extend this result to acceptable numberings.  The reason is that the argument in the density-boosting Lemma~\ref{lem:pdl} also goes through for the one-dimensional version of $\MIN_\psi$,  ${\rm SD}_\psi=\{e : (\forall j <e)\: [\psi_j(0) \neq \psi_e(0)]\}$, which is known to be $(1,2)$-recursive for some acceptable numbering~$\psi$ (care of the Remark following Theorem~2.3 in \cite{ST12}).
\end{remark}

By modifying the construction in \cite[Theorem~2.3]{ST12}, one can show that  ${\rm SD}_\psi$ is  $(1,2)$-recursive for some polynomially-bounded numbering $\psi$, which implies  that the method in Theorem~\ref{t:onekrec} does not even extend to polynomially-bounded numberings.  The necessary modification  in \cite[Theorem~2.3]{ST12} is to use intervals $J_{n,n \log^3 n}, \dotsc, J_{n,1}$ rather than $J_{n,2^{n+1}}, \dotsc, J_{n,1}$ to code the first $\log n + 3 \log \log n$ of Chaitin's $\Omega$ rather than the first~$n$ bits.  Then the size of the interval $I_n$ becomes less than $n^2 \log^6 n$, and hence the index $e_n$ is at most $\sum_{k=1}^n k^2 \log ^6 k = O(n^4)$.  Then the numbering $\psi$ constructed becomes polynomially bounded, and the Kolmgorov complexity argument at the end still works.

\section{Non-approximability of $\min_\phi$}
\label{s:minf}
The main focus in this section is on the \emph{shortlist for functions problem}, Question~\ref{ques: poly-log list}, but first we consider a different type of approximation.
 We say that a function $f$ is \emph{($K$-)approximable from above} if there exists a uniform sequence of ($K$-)computable functions $f_0, f_1, \dotsc$ such that for all $x$, $f_s (x) \geq f_{s+1} (x)$ and $\lim_s f_s(x) = f(x)$.  We define \emph{($K$-)approximable from below} similarly, but with the inequalities reversed. In some sense, $\min_\phi (e)$ is the function analog of Kolmogorov complexity for strings, $C(x)$.  We investigate whether $\min_\phi(e)$ has approximability properties similar to those of $C(x)$.  $C(x)$ is approximable from above, but not from below~\cite{LV08}. For $\min_\phi(e)$, we have the following contrasting result.

\begin{proposition}
For any numbering $\phi$, the function $\min_\phi$ is $K$-approximable from below.  If $\phi$ is computably bounded, then $\min_\phi$ is not $K$-approximable from above.
\end{proposition}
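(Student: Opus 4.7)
The plan is to treat the two halves of the proposition separately.

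For the positive direction, I build a uniformly $K$-computable, non-decreasing sequence $f_s$ with $\lim_s f_s(e) = \min_\phi(e)$. At stage $s$, define $f_s(e)$ to be the least $j \leq e$ not yet ruled out, where $j$ is ruled out as soon as an input $x$ witnesses a disagreement between $\phi_j$ and $\phi_e$ --- either both convergent with distinct values, or exactly one convergent. Both kinds of disagreement are uniformly $K$-decidable in $(j,e,x)$, since $K$ decides convergence and returns the value. Exclusions are permanent, so $f_s(e) \leq f_{s+1}(e)$; and $f_s(e) \to \min_\phi(e)$ because every $j < \min_\phi(e)$ disagrees with $\phi_e$ somewhere and is eventually excluded, while $\min_\phi(e)$ itself is never excluded.

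For the negative direction, I argue by contradiction. Assume $g_s$ is a uniformly $K$-computable non-increasing sequence with $\lim_s g_s(e) = \min_\phi(e)$, and consider
\[
S \;=\; \bigl\{(e,n) : \min_\phi(e) \leq n\bigr\} \;=\; \bigcup_{j \leq n}\bigl\{e : \phi_e = \phi_j\bigr\}.
\]
Unconditionally $S$ is $\Pi^0_2$: the predicate ``$\phi_j(x) = \phi_e(x)$'' is $\Delta^0_2$ in $(j,e,x)$, so ``$\phi_e = \phi_j$'' is $\Pi^0_2$, and bounded existential quantification preserves $\Pi^0_2$. Under the hypothesis, however, $(e,n) \in S$ iff $\exists s \bigl(g_s(e) \leq n\bigr)$, which puts $S$ in $\Sigma^0_2$ as well. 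Hence $S \in \Sigma^0_2 \cap \Pi^0_2 = \Delta^0_2$, i.e., $S$ is $K$-decidable.

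A bounded search then recovers $\min_\phi$ as a $K$-computable function: given $e$, return the least $n \leq e$ with $(e,n) \in S$. In particular $\MIN_\phi = \{e : \min_\phi(e) = e\}$ is $K$-decidable. This contradicts the extension of Meyer's theorem, proved in Section~\ref{s:compbound}, which asserts $\MIN_\phi \equiv_\T K'$ for every computably bounded numbering, and hence $\MIN_\phi \not\leq_\T K$. The main obstacle of the proof is recognizing that the monotone-from-above hypothesis is precisely what collapses the naturally $\Pi^0_2$ set $S$ down to $\Delta^0_2$; once that is in hand, the contradiction follows from the forward reference.
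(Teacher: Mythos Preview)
Your proof is correct and essentially matches the paper's. The only cosmetic difference is in the second half: the paper combines the hypothesized approximation from above with the first half (approximability from below) to conclude in one line that $\min_\phi$ is $K$-computable, whereas you reach the same conclusion by placing $S$ in $\Sigma^0_2 \cap \Pi^0_2$; the contradiction with Lemma~\ref{lem: nolala} is then identical.
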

\begin{proof}
Let $\phi$ be a numbering.  Using a $K$-oracle, one can enumerate all pairs $\pair{e,j}$ such that $\phi_e \neq \phi_j$ by searching for the least input on which either $\phi_e$ and $\phi_j$ disagree or where one function converges and the other one doesn't.  Define $f_s(e)$ to be the the least index~$j$ such that $\phi_j(x) = \phi_e(x)$ for all $x \leq s$.  Now $f_s(e)$ is an increasing function which eventually settles on $\min_\phi(e)$, and therefore $f_0, f_1, \dotsc$ is a uniform, $K$-computable sequence of functions witnessing that $\min_\phi$ is $K$-approximable from below.

If $\phi$ were computably bounded and also approximable from above, this would imply that $\min_\phi$ is a $K$-computable function, whence $\MIN_\phi \leq \emptyset'$, contradicting Lemma~\ref{lem: nolala}.
\end{proof}

Although we can approximate $\min_\phi$  from below using a halting set oracle, no unbounded computable function bounds $\min_\phi$ from below when $\phi$ is computably bounded.  In this sense, $\min_\phi$ resembles Kolmogorov complexity $C(x)$ \cite{LV08, ZL70}.
\begin{proposition}
For every computably bounded numbering  $\phi$, there exists no unbounded computable function $h$ such that $\min_\phi(e) \geq h(e)$ for all $e$. 
\end{proposition}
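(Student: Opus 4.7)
My plan is to derive a contradiction with the classical fact, recalled in the introduction, that no unbounded computable function lower-bounds Kolmogorov complexity $C$~\cite{ZL70}. Suppose toward contradiction that $h$ is an unbounded computable function with $h(e)\leq\min_\phi(e)$ for every $e$; from $h$, I will manufacture an unbounded computable lower bound on $C$, contradicting this fact.

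The bridge I will use is the auxiliary numbering $\psi_q=\phi_{U(q)}$ already employed in the proof of Lemma~\ref{lem:okra}, where $U$ is the universal machine underlying $C$. Applying the computable boundedness of $\phi$ to $\psi$ yields a computable $s$ such that, whenever $U(q)\downarrow$, $\min_\phi(U(q))\leq s(q)$. Combined with the hypothesis, this gives $h(U(q))\leq s(q)$ for every such $q$. Now fix an arbitrary $x$ and let $q^{*}$ be a shortest $U$-program for $x$, so that $|q^{*}|=C(x)$ and $q^{*}<2^{C(x)+1}$ as an integer. Substituting $q=q^{*}$ yields $h(x)\leq s(q^{*})\leq\tilde s(C(x))$, where $\tilde s(k):=\max\{s(q):q<2^{k+1}\}$ is a computable, nondecreasing majorant of $s$.

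If $\tilde s$ is bounded then so is $h$, a contradiction. Otherwise the computable pseudo-inverse $\tilde s^{-1}(y):=\min\{k:\tilde s(k)\geq y\}$ is total and unbounded, and monotonicity rearranges $h(x)\leq\tilde s(C(x))$ into $\tilde s^{-1}(h(x))\leq C(x)$ for every $x$. Unboundedness of $h$ transfers to the composition: for any $N$, picking $x$ with $h(x)>\tilde s(N-1)$ forces $\tilde s^{-1}(h(x))\geq N$. Thus $\tilde s^{-1}\circ h$ is an unbounded computable lower bound on $C$, contradicting~\cite{ZL70}. The main conceptual obstacle is spotting the right bridge between $\min_\phi$ and $C$; once one reuses the $\psi_q=\phi_{U(q)}$ construction from Lemma~\ref{lem:okra}, the remainder is a routine monotone-inversion manipulation.
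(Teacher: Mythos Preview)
Your proof is correct and follows essentially the same route as the paper: both define the auxiliary numbering $\psi_q=\phi_{U(q)}$, use computable boundedness to obtain a computable $s$ with $\min_\phi(U(q))\leq s(q)$, substitute a shortest program $q^{*}$ for $x$ to get $h(x)\leq s(q^{*})$, and then invert a monotone majorant of $s$ to produce an unbounded computable lower bound on $C$, contradicting~\cite{ZL70}. Your treatment is slightly more explicit about monotonicity (defining $\tilde s$ as a running maximum and separately disposing of the case that $\tilde s$ is bounded), whereas the paper tacitly assumes $s$ is nondecreasing; otherwise the arguments are the same.
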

\begin{proof}
Suppose that for some computable, unbounded $h$,  $\min_\phi(e) \geq h(e)$, for all $e$. Let $U$ be the underlying universal machine for the Kolmogorov complexity function $C$.  Define a numbering $\psi$ by
\[
\psi_q = \phi_{U(q)},
\]
and let $t$ be a ``translator" program such that $\phi_{t(q)} = \psi_{q}$ and $s$ a computable function such that $t(q) \leq s(q)$ for all $q$.  Let $s'$ be the unbounded computable function defined as follows: $s'(x)$ is the smallest $y$ such that $s(y) \geq x$.  Let $e$ be an arbitrary index, and let $p$ be a shortest program such that $U(p)=e$. Thus $\size{p} = C(e)$ and therefore $p < 2^{C(e)}$.  Note that $\phi_{t(p)} = \psi_p = \phi_{U(p)} = \phi_e$ and consequently $t(p) \geq \min_\phi(e) \geq h(e)$. Hence, $s(p) \geq h(e)$ and thus $s(2^{C(e)}) \geq h(e)$. It follows that $C(e) \geq \log (s'[h(e)])$ for all $e$.  Thus $C(e)$ is lower-bounded by a computable, unbounded function, which is impossible~\cite{LV08, ZL70}.
\end{proof}

We now turn our attention to the \emph{shortlist for functions problem}. The next theorem and proposition show that in the general case of acceptable numberings, only a weak lower bound on the length of shortlists is possible.

\begin{theorem}
 \label{thm:listlength2}
For any computably bounded numbering $\phi$ and any constant $k$, there is no computable function~$f: \N \to \N^k $ such that $\min_\phi(e) \in f(e)$ for all $e$.
\end{theorem}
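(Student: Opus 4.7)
My plan is to derive a contradiction from the existence of such an $f$ by appealing to the recursion theorem in an auxiliary acceptable numbering $\psi$ and transporting the result back to $\phi$ via the computably bounded property. First I would fix an acceptable numbering $\psi$, and let $h$ be the computable function supplied by Definition~\ref{d:cbnumbering} so that for every $\psi$-index $a$ there exists $j \leq h(a)$ with $\phi_j = \psi_a$. For each $a$, set $S_a = \bigcup_{j \leq h(a)} f(j)$; this is a finite set, uniformly computable from $a$, of size at most $k(h(a)+1)$. By the hypothesis on $f$, $S_a$ must contain $\min_\phi(j)$ for every $j \leq h(a)$, and in particular the minimal $\phi$-index of $\psi_a$ lies in $S_a$.

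Using the s-m-n theorem in $\psi$, I would then define a total computable function $G$ such that $\psi_{G(a)}$ is a partial-recursive function carefully arranged to differ from $\phi_i$ for every $i \in S_a$. The recursion theorem in $\psi$ furnishes a fixed point $a^*$ with $\psi_{a^*} = \psi_{G(a^*)}$, so $\psi_{a^*} \neq \phi_i$ for every $i \in S_{a^*}$. On the other hand, computable boundedness yields some $j^* \leq h(a^*)$ with $\phi_{j^*} = \psi_{a^*}$; then $f(j^*) \subseteq S_{a^*}$, and the hypothesis on $f$ forces $i^* = \min_\phi(j^*)$ to lie in $S_{a^*}$. But $\phi_{i^*} = \phi_{j^*} = \psi_{a^*}$, contradicting $\psi_{a^*} \neq \phi_{i^*}$.

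The main obstacle is constructing $\psi_{G(a)}$ so as to differ provably from every $\phi_i$ with $i \in S_a$. The naive diagonal $\psi_{G(a)}(y) = \phi_{i_{(y \bmod m)+1}}(y) + 1$, with $S_a = \{i_1, \ldots, i_m\}$, may fail to force inequality when the relevant $\phi_i$'s diverge on the diagonal inputs or happen to satisfy the induced recurrences. A more robust scheme is to reserve two disjoint families of diagonal inputs: on one family $\psi_{G(a)}$ is forced to halt with specific values, distinguishing it from any $\phi_i$ that diverges on these inputs, and on the other it performs a parallel $\max + 1$ diagonalization against the $\phi_i$'s in $S_a$, distinguishing it from any total $\phi_i$. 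Once this diagonalization is in place, the rest of the argument closes cleanly via the recursion theorem and the translation bound $h$.
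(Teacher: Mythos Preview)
Your plan has a real gap at the diagonalization step. You need a total computable $G$ with $\psi_{G(a)} \neq \phi_i$ for every $i \in S_a$, but there is no effective way to produce a partial-recursive function provably distinct from even a \emph{single} given one: if some computable $d$ satisfied $\phi_{d(i)} \neq \phi_i$ for all $i$, the recursion theorem would hand back an $i_0$ with $\phi_{i_0} = \phi_{d(i_0)}$. Your two-family scheme does not escape this obstruction. Take $i_0 \in S_a$ such that $\phi_{i_0}$ agrees with $\psi_{G(a)}$ on every input of the first family (so nothing is separated there) and diverges on every input of the second family; if the other $\phi_i$'s in $S_a$ also diverge somewhere on the second family, the parallel $\max+1$ diverges too, and $\psi_{G(a)}$ becomes ``halt with the specified values on family one, diverge elsewhere'' --- which is exactly $\phi_{i_0}$. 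At the fixed point $a^*$ this is not hypothetical: you already \emph{know} that $\min_\phi(j^*) \in S_{a^*}$ computes $\psi_{G(a^*)}$, so whatever function $\psi_{G(a^*)}$ turns out to be, some index in $S_{a^*}$ computes it and your diagonalization has necessarily failed against that index. The very feature that was supposed to yield the contradiction instead shows the construction of $G$ cannot have the promised property.

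The paper sidesteps diagonalization entirely. It passes to the indices $e(n)$ with $\phi_{e(n)}(0)=n$, takes $k$ minimal for which a $k$-element list function works on these indices, and looks at $m(n)=\min f[e(n)]$. If the range of $m$ is finite, the least list entry is eventually never the correct minimal index and can be dropped, contradicting minimality of $k$. If the range is infinite, one can compute from each $n$ an index whose $f$-list lies entirely above a prescribed threshold; Lemma~\ref{lem:okra} then forces the genuine minimal index in that list to have Kolmogorov complexity at least $n$, while it is simultaneously describable from $n$ and its position among $k$ candidates --- impossible for large $n$.
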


\begin{proof}
Fix a computably bounded numbering $\phi$, and define $e(n)$ to be the first index found such that that $\phi_{e(n)}(0) = n$.  Suppose that $k$ is the least positive integer for which some computable function $f: \N \to \N^k$ satisfies $\min_\phi[e(n)] \in f[e(n)]$ for all $n$.  If $k=1$, this immediately contradicts Lemma~\ref{lem:immunity}, so we may assume $k \geq 2$.  Observe that we may have $\min_\phi(x) \notin f(x)$ for indices $x$ which do not equal $e(n)$ for some $n$.

Let $m(n) = \min f[e(n)]$, and define $A = \{m(n) \colon n \geq 0\}$.  There are two cases.
\begin{description}
\item{\it Case~1}: $A$ is finite.

Then $\min_\phi [e(n)] \notin A$ for all but finitely many~$n$.  But then we could find a computable function $f'$ mapping indices to lists of length $k-1$ which hardcodes a correct answer for these finitely many values and maps $f'(x) = f(x) \setminus \min [f(x)]$ elsewhere.  Since $\min_\phi[e(n)] \in f'[e(n)]$ for all~$n$, this contradicts the minimality of $k$.

\item{\it Case~2}: $A$ is infinite.

Let $g$ be the order from Lemma~\ref{lem:okra} and let
\[
t(n) = \min \{x \colon g(x) \geq n\}.
\]
Then $g[t(n)] \geq n$ for all $n$.  Also let $j(n)$ be the smallest index such that $m[j(n)] \geq t(n)$. Note that since $A$ is infinite, $j(n)$ is defined for every $n$.   Now the $k$-tuple $f(e[j(n)]) = (y_1, \ldots, y_k)$ contains the minimal index $y_i = \min_\phi (e[j(n)])$. Since $m[j(n)]$ is the smallest element in the $k$-tuple, it holds that $y_i \geq m[j(n)]$.  By Lemma~\ref{lem:okra}, $C(y_i) \geq g(y_i)$.  Thus,
\begin{equation}
\label{e:one}
C(y_i) \geq g(y_i) \geq g(m[j(n)]) \geq g[t(n)] \geq n.
\end{equation}
On the other hand,  since $y_i$ is an element of the $k$-tuple and $e[j(n)]$ can be computed from $n$, it follows that
\begin{equation}
\label{e:two}
C(y_i) < \log n + 2 \log k + O(1).
\end{equation}
For large enough $n$, the inequalities~\eqref{e:one} and~\eqref{e:two} contradict each other.
\end{description}
Therefore no such $k$ exists.
\end{proof}

The next proposition shows that Theorem~\ref{thm:listlength2} is essentially optimal.
\begin{proposition}
For any computable order~$g$, there exists an acceptable numbering~$\psi$ and a computable function~$f$ which maps each index~$e$ to a list of size at most~$g(e)$ such that $\min_\psi(e) \in f(e)$.
\end{proposition}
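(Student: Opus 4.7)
The plan is to build an acceptable $\psi$ whose set of minimal indices is contained in a sparse computable subset $S \subseteq \mathbb{N}$ satisfying $|S \cap [0,e]| \leq g(e)$ for every $e$. Once I have such a $\psi$, I will simply let $f(e)$ be the enumerated prefix $S \cap [0,e]$. This yields a computable list of size at most $g(e)$, and because $\min_\psi(e) \leq e$ always lies in $S$ (by construction of $\psi$), the list $f(e)$ contains $\min_\psi(e)$.

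To produce the sparse set, I would fix any acceptable $\phi$ and (assuming without loss of generality $g \geq 1$, since any valid list for $e=0$ must contain $\min_\psi(0)=0$) exploit that $g$ is a computable, unbounded, nondecreasing function. Define $s_0 = 0$ and recursively set $s_k$ to be the least integer greater than $s_{k-1}$ satisfying $g(s_k) \geq k+1$; unboundedness of $g$ guarantees this exists, and computability of $g$ makes $k \mapsto s_k$ computable. Put $S = \{s_0, s_1, s_2, \ldots\}$, a computable set, and define
\[
\psi_e = \begin{cases} \phi_k & \text{if } e = s_k \text{ for some } k, \\ \phi_0 & \text{if } e \notin S. \end{cases}
\]
Since $S$ is computable and $k$ can be recovered effectively from $s_k$, the numbering $\psi$ is computable.

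Next I would verify the two essential properties. For acceptability, observe that $k \mapsto s_k$ is a one-to-one computable embedding of $\phi$ into $\psi$ (i.e., $\psi_{s_k} = \phi_k$); hence for any further numbering $\nu$, composing the translation $\nu \to \phi$ (which exists because $\phi$ is acceptable) with $k \mapsto s_k$ yields a computable translation $\nu \to \psi$. For the containment $\MIN_\psi \subseteq S$, any $e \notin S$ satisfies $\psi_e = \phi_0 = \psi_0$ with $0 < e$, so $e \notin \MIN_\psi$. Combining this with the counting estimate, if $k^*$ is the largest index with $s_{k^*} \leq e$ then $|f(e)| = k^* + 1 \leq g(s_{k^*}) \leq g(e)$, while $\min_\psi(e) \in S \cap [0,e] = f(e)$.

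There is no substantial obstacle here; the only conceptual point is to sidestep the undecidability of minimal indices by fixing in advance the computable sparse set $S$ in which minimal indices are allowed to lie, using the padding flexibility afforded by cloning $\phi_0$ at every non-$S$ position to absorb the indices that must not be minimal.
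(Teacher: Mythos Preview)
Your proposal is correct and follows essentially the same approach as the paper: confine $\MIN_\psi$ to a computable sparse set by placing the $\phi$-indices at selected positions and padding everywhere else with $\phi_0$, then return the sparse set up to $e$ as the list. The only difference is cosmetic---the paper parametrizes the sparse set via the jump points of $g$ (positions where $g(a_n) > g(a_n-1)$) whereas you define $s_k$ as the first place where $g$ reaches $k+1$; both choices guarantee $|S \cap [0,e]| \leq g(e)$ by the same monotonicity argument.
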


\begin{proof}
Let $\phi$ be an acceptable numbering with $\phi_0$ and $\phi_1$ both being the everywhere divergent function.  For $n \geq 1$, let $a_n$ denote the $n^\text{th}$ smallest positive integer satisfying $g(a_n) > g(a_n - 1)$, and define the numbering $\psi$ by
\[
\psi_e = 
\begin{cases}
\phi_n &\text{if $e = a_n$ and $n \geq 2$,}\\
\phi_0 &\text{otherwise},
\end{cases}
\]
and let
\[
f(e) = \{a_n : a_n \leq e \text{ and } n \geq 2\} \cup \{0\}.
\]
Now $f(e)$ contains all the $\psi$-minimal indices up to $e$ and has size at most $g(e)$.
\end{proof}

For numberings with the Kolmogorov property, a sharper lower bound is possible.

\begin{theorem}
\label{t:quadbound}
Let $\phi$ be a numbering with the Kolmogorov property and let $f$ be a computable function which maps each index $x$ to a list of indices containing $\min_\phi(x)$.  Then $\size{f(x)} = \Omega(\log^2 x)$ for infnitely many~$x$.
\end{theorem}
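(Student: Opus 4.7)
The proof proceeds by contradiction: assume there is a constant $c$ with $|f(x)| \leq c(\log x)^2$ for every sufficiently large $x$, and derive a minimal $\phi$-index whose Kolmogorov complexity is strictly below the bound $|y| - \log|y|$ guaranteed by Lemma~\ref{l:highmin}.

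First, I would extract a sequence of ``target'' sets via Lemma~\ref{lem:pdl} with $p = 1$. For each large $n$ I would choose $\epsilon = \epsilon(n)$ polynomially small in $n$ to obtain $A_n \subseteq I_n$ whose elements are minimal with density at least $1 - \epsilon$, whose conditional complexity satisfies $C(A_n \mid n) = O(\log 1/\epsilon) = O(\log n)$, and whose cardinality is $\Omega(|I_n|)$. So overwhelmingly many of the roughly $2^n$ elements of $A_n$ are minimal indices of length about $n$, yet the whole set $A_n$ is described by only $O(\log n)$ bits once $n$ is given.

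Next, I would use the list-approximator $f$ to build an efficient encoding for each minimal $y \in A_n \cap M_\phi$. An auxiliary numbering $\rho$ built out of $f$---for example by setting $\rho_{\langle i, k \rangle}$ to be the function computed by the $k$-th element of the list $f(i)$---admits a linear translation back into $\phi$ by the Kolmogorov property, which tightly constrains the image of $f$ on $A_n$. Combining this with the bipartite ``list graph'' whose left vertices are $A_n$ and whose right vertices are $\bigcup_{x \in A_n} f(x)$, I aim to produce a two-level description of each $y \in A_n \cap M_\phi$: one level identifies a small subcollection of $A_n$ whose lists contain $y$, and the second picks out $y$ within the narrowed sub-list.

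The main obstacle is calibrating this two-level argument so that the total saving is $\omega(\log n)$ bits: a single round of pigeon-holing over $A_n$ saves only $\log L = O(\log \log n)$ bits, which is not enough to contradict Lemma~\ref{l:highmin}. Producing the amplified saving is precisely where the $\log^2$ exponent must enter, paralleling the quadratic lower bound of Bauwens, Makhlin, Vereshchagin, and Zimand~\cite{BMVZ13} for finite strings. Porting their counting argument to the $\phi$-index setting, with Lemma~\ref{l:highmin} playing the role of the bound $C(x) \geq |x| - O(1)$ for random strings and Lemma~\ref{lem:pdl} playing the role of the uniform distribution on $\{0,1\}^n$, is the principal technical challenge.
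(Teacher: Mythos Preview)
Your proposal is not a proof but an outline with its central step missing: you explicitly say that ``porting [the BMVZ13] counting argument to the $\phi$-index setting \dots\ is the principal technical challenge,'' and then stop. Everything before that --- the density-boosting sets $A_n$, the bipartite list graph, the two-level encoding idea --- is scaffolding; the actual amplification from a one-round $O(\log\log n)$ saving to an $\omega(\log n)$ saving is never carried out. Moreover, even the scaffolding is shakier than you suggest: Lemma~\ref{l:highmin} gives $C(y)\ge |y|-\log|y|$, not $C(y)\ge |y|-O(1)$, so the slack you must beat is already $\log n$, and a naive transplant of the string argument (which relies on the tighter $|x|-O(1)$ bound for random strings) will not obviously close the gap.

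The paper avoids all of this by a direct \emph{reduction} to the string case rather than a re-proof. Given the list function $f$, it builds a computable $g$ that maps each string $x$ to a list of $U$-descriptions: pick a canonical index $e(x)$ with $\phi_{e(x)}(0)=x$, and for each $z\in f(e(x))$ output a short program $t(z)$ with $U(t(z))=\phi_z(0)$. The Kolmogorov property of $\phi$, applied to the numbering $\psi_p=\phi_{e(U(p))}$, forces $\min_\phi(e(x))\le O(p)$ whenever $U(p)=x$; hence the element $\min_\phi(e(x))\in f(e(x))$ yields, via $t$, a $U$-description of $x$ of length $C(x)+O(1)$. So $g(x)$ is a computable list of the same size as $f(e(x))$ containing a near-optimal description of $x$, and the $\Omega(\log^2 x)$ bound follows immediately from \cite[Theorem~I.3]{BMVZ13}. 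No density-boosting, no re-derivation of the quadratic bound --- the work is entirely in setting up the translation so that the existing string result applies verbatim.
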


\begin{proof}
Let $\phi$ and $f$ be as in the hypothesis, and let $e(x)$ be the computable function which outputs the first index found such that $\phi_{e(x)}(0) = x$.  Let $U$ be the universal machine for Kolmogorov complexity, and define a further numbering~$\psi$ by $\psi_p = \phi_{e[U(p)]}$ if $U(p) \converge$ and $\psi_p$ being the everywhere divergent function otherwise.  Since $U$ is an optimal machine, there exists a function~$t$ such that $U [t(z)] = \phi_z(0)$ and $t(z) \leq O(z)$.  Define a computable function $g$ from indices to sets of descriptions for $U$ by
\[
g(x) = \{ t(z) \colon z \in f[e(x)]\}.
\]
By the Kolmogorov property, there exists a linearly bounded (but not necessarily computable) function $h$ such that $\psi_p = \phi_{h(p)}$.  Now observe that  whenever $U(p) = x$, we have
\[
\phi_{h(p)} = \psi_p = \phi_{e[U(p)]} = \phi_{e(x)},
\]
whence
\[
\min_\phi [e(x)] \leq h(p) \leq O(p).
\]

Fix an $x$, and let $p$ be the least program such that $U(p)=x$.  Since $\min_\phi[e(x)] \in f[e(x)]$, there exists $q \in g(x)$ such that $U(q) = \phi_{e(x)}(0) = x$ and $q \leq O(\min_\phi[e(x)]) \leq O(p)$.  So $g(x)$ is a list with the same length as~$f(x)$ containing a description for $x$ which is only a constant many bits longer than the minimal $U$-description for~$x$.  By \cite[Theorem~I.3]{BMVZ13}, $g(x)$, and hence $f(x)$ as well, must have length $\Omega(\log^2 x)$ for infinitely many~$x$.
\end{proof}

Our final result shows that some numberings with the Kolmogorov property do not admit shortlists.

\begin{theorem}
\label{t:expbound}
There exists a Kolmogorov numbering $\psi$ such that if $f$ is a computable function which maps each index~$x$ to a list of indices containing $\min_\psi(x)$, then $\size{f(x)} = \Omega(x)$ for infinitely many $x$.
\end{theorem}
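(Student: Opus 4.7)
The plan is to build $\psi$ on top of a Kolmogorov base numbering $\phi$. Set $\psi_{2e} := \phi_e$ on even indices, yielding the Kolmogorov property for $\psi$ with constant $2$. Partition the odd indices into a computable \emph{pool} $R$ and a \emph{diagonalization set} $D$, both of density $\Omega(1)$. To populate $R$, apply Lemma~\ref{lem:pdl} to $\phi$ with $p=1$ and $\epsilon=1/4$ to obtain sets $A_n \subseteq I_n$ with $\size{M_\phi \cap A_n} \geq (3/4)\size{A_n}$ and $\size{A_n} = \Omega(\size{I_n})$. Fix a computable injection $y\colon R \to \bigcup_n A_n$ with $y(r) > r/2$, and define $\psi_r := \phi_{y(r)}$ for $r \in R$. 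Call $r \in R$ \emph{good} when $y(r) \in M_\phi$. For good $r$, $\min_\psi(r) = r$: the smallest even $\psi$-copy of $\phi_{y(r)}$ is $2y(r) > r$, and injectivity of $y$ rules out smaller pool copies. Lemma~\ref{lem:pdl} guarantees that at least $3/4$ of the pool indices in any relevant window are good.

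Next fix a large constant $k_0$ (say $k_0 = 32$). For each pair $(e, j) \in \N^2$, schedule one stage $s(e,j)$. In the stage, pick a fresh $d_s \in D$ much larger than all previously used indices, and simulate $\phi_e$ for $s$ steps on the relevant candidate inputs. If any relevant simulated list has size exceeding $d_s/k_0$, skip the stage. Otherwise, assemble a batch of distinct pairs $(r_s^{(i)}, d_s^{(i)})$ with $r_s^{(i)} \in R \cap [d_s/2,\,d_s)$ unused, $d_s^{(i)} \in D \cap [d_s,\,2d_s)$ unused, and $r_s^{(i)} \notin \phi_e(d_s^{(i)})$. The batch size $t_s$ is chosen strictly larger than the maximum possible number of bad pool indices in $R \cap [d_s/2, d_s)$, which by the density bound is at most $(1/4)\size{R \cap [d_s/2, d_s)}$. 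Existence of such a valid batch, viewed as a matching in a bipartite graph, follows from Hall's theorem combined with the bound $\size{\phi_e(d)} \leq d/k_0$ when $k_0$ is chosen large enough relative to the pool density. Assign $\psi_{d_s^{(i)}} := \psi_{r_s^{(i)}}$ for each pair in the batch.

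Verification proceeds as follows. The Kolmogorov property is automatic from the even-index setup. Suppose for contradiction that some computable $\phi_e$ is a correct shortlist for $\min_\psi$ with $\size{\phi_e(x)} \leq x/k_0$ for all sufficiently large $x$. Then infinitely many stages $s(e,j)$ complete their batches. At each such stage the pigeonhole principle forces at least one good pair $(r_s^{(i)}, d_s^{(i)})$ in the batch, because good indices strictly outnumber bad ones there. For that pair, $\min_\psi(d_s^{(i)}) = r_s^{(i)} \notin \phi_e(d_s^{(i)})$, contradicting correctness of $\phi_e$. Hence $\size{\phi_e(x)} > x/k_0$ for infinitely many $x$, which is precisely the claim that $\size{f(x)} = \Omega(x)$ infinitely often.

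The main obstacle is that membership in $M_\phi$ is $\Pi^0_2$ and not computably verifiable, so the construction cannot directly pinpoint good pool indices. Lemma~\ref{lem:pdl} overcomes this by providing the density bound $\geq 3/4$ of $M_\phi$ inside every $A_n$, and by batching enough picks per stage the pigeonhole principle still delivers at least one good pick. Calibrating the list-size constant $k_0$ to be large enough relative to the pool density, so that the forbidden set $\phi_e(d)$ cannot exhaust the supply of good pool indices in any window, is the principal technical adjustment, and it explicitly ties the proof of Theorem~\ref{t:expbound} to Lemma~\ref{lem:pdl} as announced in Section~\ref{s:tech}.
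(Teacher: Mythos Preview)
There is a genuine gap at the first step. The sets $A_n$ supplied by Lemma~\ref{lem:pdl} are \emph{not} uniformly computable in~$n$: the lemma only gives $C(A_n\mid n)=O(1)$, meaning each $A_n$ can be reconstructed from $n$ together with a fixed number of advice bits (the truncated counts $t(X),t(Y),t(Z)$ in the proof of the lemma), and those bits vary with~$n$. Consequently there is no computable injection $y\colon R\to\bigcup_n A_n$, and your definition $\psi_r:=\phi_{y(r)}$ does not yield a numbering. The paper deals with exactly this obstacle by building the advice into the $\psi$-index: it defines $\psi_{0\alpha\beta e}$ for every candidate advice string~$\alpha$ (and a second advice string~$\beta$ approximating the number of ``threats''), and plays a separate game $\Gamma_{n,\alpha,\beta}$ for each guess. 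Only the game with the correct advice has the intended density of $M_\phi$, but since all games are run, $\psi$ remains computable.

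Even granting a computable $y$, two further problems remain. First, your stage $s(e,j)$ simulates $\phi_e$ for only $s$ steps while $d_s$, being ``much larger than all previously used indices,'' grows with the stage; nothing forces $\phi_e$ to converge on $[d_s,2d_s)$ within $s$ steps, so for a total~$\phi_e$ you cannot conclude that any stage ever sees the true lists $\phi_e(d_s^{(i)})$ you must avoid. The paper sidesteps per-function diagonalisation by lower-bounding the \emph{total} conditional complexity $T(\min_\psi(x)\mid x)$: the Spoiler's Row moves enumerate programs~$q$ that are observed to halt on every column, a c.e.\ condition, and the resulting bound handles every computable list function at once. Second, ``injectivity of~$y$'' does not establish $\min_\psi(r)=r$ for good~$r$: a bad pool index $r'<r$ (one with $y(r')\notin M_\phi$) can perfectly well satisfy $\phi_{y(r')}=\phi_{y(r)}$ even though $y(r')\neq y(r)$, and likewise a $D$-index $d'<r$ assigned at another stage can collide. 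The bad indices that might attack a good $r\in[d_s/2,d_s)$ accumulate over all of $[0,d_s)$, so the pigeonhole count you sketch does not close without further work.
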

\begin{proof}
On a high level, we use an approach from~\cite[Theorem~I.4]{BMVZ13}.   The \emph{total complexity of a string $y$ conditioned by $x$}, as originally defined by Muchnik and used in \cite{BMVZ13,Ver09}, is
\[
T(y \mid x) = \min \{\size{q} \colon U(q,x) = y \text{ and $U(q,z) \converge$ for all $z$}\},
\]
where $U$ is the universal machine for Kolmogorov complexity.  Note that if $\psi$ is some numbering and for all $x$, $\min_\psi(x) \in f(x)$, where $f$ is a computable function, then $T(\min_\psi(x) \mid x) \leq \log\size{f(x)} + O(1)$, for all $x$.  Thus our plan is to define a Kolmogorov numbering $\psi$ such that $T(\min_\psi(x) \mid x) \geq \log x - O(1)$ for infinitely many $x$.  Our Kolmogorov numbering $\psi$ will be based on an arbitrary Kolmogorov numbering $\phi$, and we will use binary strings as inputs to $\psi$ rather than integers.  The inputs with prefix~1 will be used to ensure that $\psi$ is a Kolmogorov numbering, and inputs with prefix~0 will be used to code for strings with high total complexity. 

For every string $x$, we define $\psi_{1^r x} = \phi_x$, where $r = a+c+d+1$ and $a$, $c$ and $d$ are constants that will be specified later.   Next we define $\psi_{0\alpha \beta e}$ for all strings $\alpha$ of length $c$, all strings $\beta$ of length $d$,  and every string $e$ of length $a(n+1)$, for some integer $n$.  For strings $x$ which are not of either of these two forms, we set $\psi_x$ to be the everywhere undefined function.  Our goal is to obtain infinitely many $x$ and $\alpha \beta e$ such that $\min_\psi(1^r x) = 0\alpha \beta  e$ and $T(0\alpha \beta e \mid 1^r x) \geq \size{1^rx} - O(1)$, as then the theorem follows by the discussion in the previous paragraph.

The construction of $\psi$ uses a game which we call $\Gamma_{n,\alpha, \beta }$.  The precise roles of $\alpha$ and $\beta$ will be clarified later, but, in short, they provide non-uniform advice information necessary for satisfying some requirements.  The  game indicates how some of the functions $\psi_{0 \alpha \beta e}$ are calculated on input $0$; on all inputs different than $0$, these functions are undefined. The game is played between two players, Matchmaker and Spoiler.  Roughly speaking, Matchmaker selects pairs $(e,x)$, with the effect that $\psi_{0 \alpha \beta e}$ is set equal to $\phi_x$, whereas the Spoiler checks if $e$ or $\phi_x$ violate requirements which demand that $e$ has high total complexity conditioned by $1^rx$ and that $x$ is in $\MIN_{\phi}$.  When such violations are found, Spoiler blocks the pair $(e,x)$, and  Matchmaker is forced to look for another pair $(e,x)$.

Our analysis of the game $\Gamma_{n, \alpha, \beta}$ consists of both combinatorial and computational components.  In terms of combinatorics, we have to show that the Matchmaker does not run out of unspoiled pairs $(e,x)$ to choose from as moves.  The computational aspect has to do with requirements and appropriate definition for $\psi_{0\alpha \beta e}$.  The following description  isolates the combinatorial aspect of the game.

\paragraph{The game.}  The game is played on a board which is a $k' \times k$ table with $k' \geq k$.  Initially all the cells are unblocked.  Cells are indexed by their row and column, so cell $(e,x)$ is the entry on row $e$ and column $x$.  The Spoiler and Matchmaker take turns making the following types of moves, respectively.
\begin{description}
\item{\it Matchmaker move:} She picks a cell $(e,x)$ that is not blocked and places a pawn on it.  At the same time, all the cells on row $e$ and all the cells on column $x$ are blocked.  She also has the option to pass.

\item{\it Spoiler move:}  The Spoiler has two type of moves: 
\begin{description}
\item{\sl Column move:}  He picks a column $x$ and blocks all the cells in this column; 

\item{\sl Row move:} He picks one cell in each column (not necessarily in the same row, despite the name) and blocks them.
\end{description}
\end{description}
Spoiler is permitted to do at most $k/4$ Column moves and at most $k/16$ Row moves.  Matchmaker wins if at the end there is a pawn on a cell $(e,x)$ that is not blocked.

\begin{lemma}
\label{l:gl}
For every $k$ and $k' \geq k$, Matchmaker has a strategy to win the game on the board of size $k' \times k$. If Spoiler uses
a computable strategy, then Matchmaker has a computable winning strategy.
\end{lemma}
\begin{proof}
Matchmaker's strategy is to place a pawn on the first unblocked cell found and then pass until Spoiler blocks that cell.  If this happens, she places another pawn.  Let us check that she can always place a pawn, which implies that she wins the game.

Note that a Column move blocks $k'$ cells, and a Row move blocks $k$ cells. Thus the Spoiler can block at most $(k/4)k' + (k/16)k$ cells during the entire game.  Since Matchmaker only plays after a Spoiler move, Matchmaker makes at most $(k/4) + (k/16) = (5k)/16$ moves, and in each move she blocks $(k+k'-1)$ cells (a row and a column).  Thus the total number of blocked cells is
\[
\left(\frac{k}{4}\right) k' + \left(\frac{k}{16}\right) k + \left(\frac{5k}{16} \right) (k+k'-1) \leq \frac{15}{16} \cdot k'k,
\]
and therefore there always exists an unblocked cell where Matchmaker can place a pawn.
\end{proof}

Now we are prepared to define outputs $\psi_{0\alpha\beta e}(0)$ using the game $\Gamma_{n,\alpha, \beta}$.  The rules of this game are as in Lemma~\ref{l:gl}, but we need to relate the board parameters, rows, and columns to aspects of the numbering~$\psi$.  From Lemma~\ref{lem:pdl} with $p=1$ (corresponding to numberings with Kolmogorov property), we infer the existence of constants $a, c$, and for every $n$, of sets $A_n$ and intervals $I_n$ such that:
\begin{enumerate}[(i)]
\item  $A_n \subseteq I_n=\{2^{an}+1, \dotsc, 2^{a(n+1)}\}$ (therefore, when converted to binary, the elements of $I_n$ have lengths between $an+1$ and $an+a$),

\item $\size{M_\phi \cap A_n} \geq (7/8) \cdot \size{A_n}$,

\item $C(A_n \mid n) \leq c$, and

\item $\size{A_n} = \Omega(\size{I_n})$.
\end{enumerate}

First we describe the \emph{intended} setting of parameters for the game $\Gamma_{n,\alpha,\beta}$, even though for some strings $\alpha$ and $\beta$, the setting will be slightly different as we will explain.  In the intended setting the game is played on a board where columns are indexed by the strings $x$ in $A_n$, whose binary expansions have length at least $an+1$, and the rows are indexed by the strings $e$ of length $a(n+1)$.  Thus the board has dimensions $k' \times k$, where $k = \size{A_n}$, and $k' = 2^{a(n+1)}$.  The set $A_n$ is not computable, but $C(A_n \mid n) \leq c$ by Property~(iii).  Hence we can use $e$ (from which we can derive $n$) and the string $\alpha$ of length $c$ as nonuniform advice for computing the set $A_n$.  Note that in case $\alpha$ is a correct advice, there exists a constant $\gamma$ such that $k \geq \gamma 2^{a(n+1)}$ by Property~(iv) above.

During the game we would like to determine whether or not an arbitrary $x \in A_n$ belongs to $M_\phi$.  For some $x \in M_\phi$,  $x$ may appear at some stage~$s$ to be in the complement of $M_\phi$, because there is some $y < x$, under the lexicographical ordering of binary strings, such that $\phi_{x,s}(0) \converge =  \phi_{y,s}(0) \converge$, even though at some later stage $t$, $\phi_{y,t}$ converges on some nonzero input.  In this case $y$ has threatened $x$, and this is a situation that we want to avoid.  Formally, a string $y$ is a \emph{threat} to $A_n$ if $y < \max A_n$ and $\phi_y$ converges on more than one input.  Let $T$ be the number of threats of $A_n$. If we knew $T$, we could determine all threats.  While we do not know $T$, using $n$ and a constant number of advice bits we can determine a number $T'$ which is within $k/8$ of $T$.  Here is how.  Since any threat has index less than $\max A_n$, we have $T < 2^{(a+1)n}$.  Let $d = \lceil \log 1/\gamma \rceil + 3$. We write $T$ on exactly $a(n+1)$ bits and we let $T'$ be the number obtained by retaining the $d$ most significant bits in the binary expansion of $T$ and filling the rightmost $a(n+1) - d$ bits with $0$'s. Then
\[
T-T' < 2^{a(n+1)-d} \leq 2^{-d} (k/\gamma) \leq k/8.
\]
In the intended setting of the game $\Gamma_{n,\alpha, \beta}$, $\beta$ is the string consisting of the first $d$ bits in the binary expansion of $T$. 

Games of the form  $\Gamma_{n,\alpha', \beta'}$ with incorrect advice $\alpha'$ or $\beta'$  are also played, and for this reason at the start of the game we check if the size of $A'_n$, the set constructed from $n$ and advice $\alpha'$, is at least $\gamma 2^{a(n+1)}$. If this is not the case, then the game  $\Gamma_{n,\alpha', \beta'}$ is not played.

In the following analysis we assume that $\alpha$ and $\beta$ are correct, and therefore the game $\Gamma_{n,\alpha, \beta}$ has the intended parameters. Before the game starts we construct $A_n$ and, using $n$ and $\beta$, we enumerate threats of $A_n$ until we find $T'$ of them. In this way we find a set $B_n$ containing all threats of $A_n$ except at most $k/8$ of them.

Next we describe a computable Spoiler strategy by indicating the situations in which he plays a Column move and the situations where he plays a Row move.

\begin{description}
\item{ \it Column move.}
Spoiler plays a Column move $x$ at stage $s$ if column $x$ is not already blocked and one of the following happens:
\begin{enumerate}[(a)]
\item some index $y < x$ (lexicographically) is found such that $y \not \in B_n$ and $\phi_{x,s}(0) \converge =  \phi_{y,s}(0) \converge$ (we say that $y$ \emph{attacks} $x$), or

\item some input $u \not= 0$ is found such that $\phi_{x,s}(u) \converge$.
\end{enumerate}
There are two cases to analyze.
\begin{description}
\item{\sl Case 1:} (a) happened and $x \in \MIN_\phi$. In this case,  $y$ must be one of the at most $k/8$ threats that are outside $B_n$.  Since any string $y$ can attack at most one string in $\MIN_\phi$, Case~1 can occur at most $k/8$ times.

\item{\sl Case 2:} (b) happened or $x \not \in \MIN_\phi$. By Property~(ii) of $A_n$,  Case 2 can occur at most $k/8$ times.
\end{description}
Therefore, the number of Column moves is bounded by $k/8 + k/8=k/4$, as required.

\item{\it Row move.}
If a string $q$ of length  at most $\log (\size{A_n}) - 4$ is found such that $U(q, 1^r x)$ halts for all $x \in A_n$, then Spoiler makes a Row move and
blocks all the cells $(e,x)$ such that $U(q,1^r x) = 0\alpha \beta e$. There are at most $k/16$ such moves, as required.
\end{description}
This concludes the description of Spoiler's strategy.

By Lemma~\ref{l:gl}, Matchmaker has a computable winning strategy. This strategy permits us to define the function $\psi_{0\alpha \beta e}$ as follows.  Initially $\psi_{0\alpha \beta e}$ is undefined on all inputs. When Matchmaker puts a pawn on cell $(e,x)$, we set $\psi_{0\alpha \beta e}(0) = \phi_x(0)$.

Because Matchmaker wins the game  $\Gamma_{n,\alpha, \beta}$, at the end of the game, some cell $(e,x)$ has a pawn and is not blocked. We call $(e,x)$, the \emph{winning cell} of the game. We derive that
\begin{enumerate}[(1)]
\item $x \in M_\phi$ (otherwise the cell $(e,x)$ would be blocked by a Column move of the Spoiler),

\item $\psi_{1^rx} = \psi_{0\alpha \beta e}$ (since $\psi_{0\alpha \beta e}(0) = \phi_x(0) = \psi_{1^rx}(0)$ and these functions are only defined on input  $0$), and

\item $T(0 \alpha \beta e \mid 1^r x) > \log (\size{A_n}) - 4 = \size{1^r x} - O(1)$  (otherwise the cell $(e,x)$ would be blocked by a Row move of the Spoiler).
\end{enumerate}

From (1), (2), and the fact that $r = a+ \size{\alpha} + \size{\beta} +1$, $x$ has length at least $an +1$ and $e$ has length $an+a$,  we infer that $\min_\psi(1^rx) \leq 0\alpha \beta e$. Let us analyze the possible situations when the inequality might be strict. It is not possible that $\psi_{1^rx'} = \psi_{1^r x}$ for any $x'<x$, because this would contradict $x \in \MIN_\phi$.  But the inequality may still be strict because it can happen that there exist $\alpha' \not= \alpha$  or $\beta' \not= \beta$ and $e'$ such that $(e',x)$ is the winning cell in the game $\Gamma_{n,\alpha', \beta'}$. However in that case we still have
\[
T(0 \alpha' \beta' e' \mid 1^r x) > \log (\size{A'_n}) - 4 = \size{1^r x} - O(1)
\]
because the set $A'_n$ constructed in the game $\Gamma_{n,\alpha', \beta'}$ from $n$ and advice $\alpha'$  has size $\Omega(2^{an})$ (otherwise $\Gamma_{n,\alpha', \beta'}$ would not have been played).  It follows that for the $x$ in winning cell,
\begin{equation}
\label{e:totalineq}
T\big[\min_\psi(1^rx) \mid 1^r x\big] > \size{1^r x} - O(1).
\end{equation}

In summary, for every $n$ there exists $x \in A_n$, namely the $x$ from the winning cell of the game $\Gamma_{n,\alpha, \beta}$ where $\alpha$ and $\beta$ are the correct advice for $A_n$ and $B_n$, for which inequality~\eqref{e:totalineq} holds. The theorem is proven.
\end{proof}

\section{$\MIN_\phi$ under computably bounded numberings}
\label{s:compbound}
We show that some recursion-theoretic results from the literature concerning minimal indices for acceptable numberings also hold for computably bounded numberings.  The following lemma refines a theorem of Blum~\cite{Blu67}.

\begin{lemma} \label{lem:immunity}
If $\phi$ is a computably bounded numbering, then $\MIN_\phi$ is immune.
\end{lemma}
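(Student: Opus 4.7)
My plan is to adapt Blum's classical argument for acceptable numberings, but to replace his appeal to the Recursion Theorem (which requires effective translations) with a direct self-reference through the total computable bound guaranteed by computable boundedness. The proof proceeds by contradiction: suppose $W$ is an infinite c.e.\ subset of $\MIN_\phi$ and derive an inconsistency.

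The key construction is an auxiliary numbering $\psi$ that depends on $W$ and on the diagonal behavior of $\phi$ itself. Specifically, I would define $\psi_i(x)$ as follows: first simulate $\phi_i(i)$; if it converges to some value $v$, enumerate $W$ until an element $w > v$ is found (which must exist since $W$ is infinite), and then output $\phi_w(x)$; if $\phi_i(i) \diverge$, then leave $\psi_i(x)$ undefined on every input. This gives a well-defined numbering of partial computable functions, uniformly computable in $i$.

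Now apply computable boundedness of $\phi$ to this $\psi$: there exists a total computable function $g$ such that for every $i$, some $j \leq g(i)$ satisfies $\phi_j = \psi_i$. Since $\phi$ is universal and $g$ is total computable, fix an index $e$ with $\phi_e = g$; in particular $\phi_e(e) = g(e) \converge$. By construction, $\psi_e = \phi_{w^*}$ where $w^*$ is the first enumerated element of $W$ exceeding $g(e)$. But the translator provides some $j \leq g(e) < w^*$ with $\phi_j = \psi_e = \phi_{w^*}$, contradicting the minimality of $w^* \in W \subseteq \MIN_\phi$.

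The main conceptual obstacle is that in this setting we do not have an effective translation from $\psi$ to $\phi$, only the existence of a computable bound on the least translate. Thus Blum's original fixed-point move is not directly available. The insight that unlocks the proof is that one does not actually need a genuine recursion-theoretic fixed point: because the hypothesis produces a \emph{total} computable function $g$, universality of $\phi$ automatically assigns $g$ some index $e$, and this self-referential $e$ plays the role of the fixed point, letting us force $\phi_e(e)$ to be precisely the value $g(e)$ that any putative minimal index of $\psi_e$ must exceed.
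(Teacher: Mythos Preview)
Your argument is correct. The self-reference trick works: computable boundedness makes $\phi$ universal, so the total bound $g$ has some $\phi$-index $e$, and $\phi_e(e)=g(e)$ then forces $\psi_e$ to copy a minimal index $w^*>g(e)$ that is nevertheless matched by some $j\le g(e)$, contradicting $w^*\in\MIN_\phi$. This is a genuinely different route from the paper. The paper does not adapt Blum's argument at all; instead it invokes Lemma~\ref{lem:okra}, which gives a computable order $g$ with $C(x)\ge g(x)$ for every $x\in\MIN_\phi$, and observes that an infinite c.e.\ subset of $\MIN_\phi$ would let one effectively produce strings of arbitrarily high Kolmogorov complexity, which is impossible. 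Your proof is more self-contained and purely recursion-theoretic, avoiding Kolmogorov complexity entirely; the paper's proof is a two-line corollary because Lemma~\ref{lem:okra} is already in hand and is needed elsewhere (e.g., in Theorem~\ref{thm:listlength2}). Your replacement of the Recursion Theorem by mere universality of $\phi$ is the right move and nicely isolates why effective translation is unnecessary here.
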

\begin{proof}
Otherwise, by Lemma~\ref{lem:okra}, we could compute for every positive integer $k$ a string $x$ with $C(x) > k$, which is impossible.
\end{proof}
The next argument follows \cite[Theorem~2.11]{Sch98} which in turn credits~\cite{Mey72}.

\begin{lemma} \label{lem: MIN computes K}
For any computably bounded numbering $\phi$, $\MIN_\phi \geq_\T \0'$.
\end{lemma}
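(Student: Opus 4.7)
The plan is to reduce $\emptyset'$ to $\MIN_\phi$ by using the oracle first to produce a fixed-point-free function for $\phi$ and then adapting the Arslanov-style completeness argument to the computably bounded setting. The first step is immediate: set $g(e) = \min\{j \in \MIN_\phi : j > e\}$. This $g$ is total, since $\MIN_\phi$ is infinite (there is one minimal index per partial-recursive function, and there are infinitely many of those); it is $\MIN_\phi$-computable by construction; and it is fixed-point-free for $\phi$, because $g(e) > e$ and $g(e) \in \MIN_\phi$ force $\phi_{g(e)} \neq \phi_e$ by the defining property of minimality.

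For the second step, I plan to set up a uniform partial-recursive family $\{\psi^{(e)}_n\}_{n}$ whose behavior encodes the halting question $\phi_e(e) \converge$. A natural candidate is $\psi^{(e)}_n(x) = n$ if $\phi_e(e)$ halts within $x$ steps, and undefined otherwise, so that when $e \notin K$ every $\psi^{(e)}_n$ is the everywhere-divergent function, while when $e \in K$ different values of $n$ give distinct (and nontrivial) partial functions. By computable boundedness of $\phi$ applied to this numbering, there is a computable $t$ such that some $\phi$-index $\leq t(e,n)$ computes $\psi^{(e)}_n$. Querying the $\MIN_\phi$-oracle in $[0, t(e,n)]$ and running a bounded simulation, the intent is to detect whether any minimal index in that window witnesses a function distinct from the everywhere-divergent one, which pinpoints $e \in K$.

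The hard part will be adapting Arslanov's argument to the present setting, where $\MIN_\phi$ is in general only $\Sigma_2$ (not r.e.\ or even $\Delta_2$) and the Recursion Theorem is unavailable because $\phi$ need not be acceptable. My intended substitute for the Recursion Theorem is the computable bound $t$ from Definition~\ref{d:cbnumbering}: rather than obtaining an exact diagonalizing index via self-reference, the bound localizes such an index into a finite interval, inside which the $\MIN_\phi$-oracle can search effectively. Verifying that this localization interacts cleanly with the FPF property of $g$, and that the immunity of $\MIN_\phi$ (Lemma~\ref{lem:immunity}) together with the complexity lower bound (Lemma~\ref{lem:okra}) prevent the bounded search from returning spurious witnesses, will be the main technical challenge.
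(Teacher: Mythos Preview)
There is a genuine gap. The fixed-point-free function $g$ you build in the first paragraph is correct but is then never used; Arslanov's criterion in its standard form needs an r.e.\ oracle, $\MIN_\phi$ is only $\Sigma^0_2$, and an arbitrary oracle computing an FPF function need not compute $\emptyset'$ (there exist low FPF degrees). So the Arslanov framing is a red herring here. More seriously, the test you propose in the second paragraph---``detect whether any minimal index in $[0,t(e,n)]$ witnesses a function distinct from the everywhere-divergent one''---does not separate $e\in K$ from $e\notin K$: every sufficiently long initial segment of $\N$ already contains minimal $\phi$-indices for many nontrivial functions (constants, the identity, etc.), regardless of $e$. Neither immunity nor Lemma~\ref{lem:okra} rules out these ``spurious witnesses''; with the oracle you already see all of $\MIN_\phi$, and high Kolmogorov complexity of a minimal index says nothing about which function it names. (A side issue: $K=\emptyset'$ is the diagonal halting set for a fixed \emph{acceptable} numbering $\psi$, not for $\phi$; writing $\phi_e(e)$ is not quite right.)

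The paper's proof never tries to identify which minimal index codes the auxiliary function. It encodes the halting \emph{time} of $\psi_e(e)$ into the domain of a single function, via $\phi_j(x)=1$ iff $\psi_{e,x}(e)\converge$; computable boundedness places the minimal index for this function below a computable $f(e)$. One then uses the oracle to list $\MIN_\phi\cap\{0,\dots,f(e)\}$, discards the index $a$ of the everywhere-divergent function, and for each remaining $j$ searches for some input on which $\phi_j$ converges (it must, since $j\in\MIN_\phi$ and $j\neq a$). The maximum such input $m(e)$ is at least the halting time whenever $e\in K$, so one finishes with the bounded check $\psi_{e,m(e)}(e)\converge$. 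Your $\psi^{(e)}_n$ already has the right domain structure; the missing move is to read off a stage bound from the convergence behaviour of \emph{all} minimal indices in the window rather than to try to recognise the correct one.
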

\begin{proof}
Let $\psi$ be the default acceptable numbering which $K$ is defined with respect to.  By the computably bounded property, there exists a computable function $f$ such that for any index $e$, there exists $j \leq f(e)$ defined by
\[
\phi_j (x) =
\begin{cases}
1 &\text{if $\psi_{e,x}(e) \converge$,} \\
\diverge &\text{otherwise.}
\end{cases}
\]
Let $a$ be the $\phi$-index for the everywhere divergent function.  Using a $\MIN_\phi$ oracle, compute the value 
\begin{multline*}
m(e) = \max \{s \colon \text{$s$ is the first value at which some index in} \\
\text{$\{0, 1, \dotsc, f(e)\} \cap \MIN_\phi \setminus \{a\}$ converges.}\}
\end{multline*}
Now $e \in K$ iff $\psi_{e,m(e)}(e) \converge$.  Indeed, unless $e \notin K$, $m(e)$ is an upper bound for the time required for $\psi_e(e)$ to converge.
\end{proof}

We now exploit an idea from \cite[[Theorem~11]{JST11} while bootstrapping off of Lemma~\ref{lem: MIN computes K}.
\begin{lemma} \label{lem: nolala}
For any computably bounded numbering $\phi$, $\MIN_\phi \equiv_\T \0''$.
\end{lemma}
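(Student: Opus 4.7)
The plan is to prove the two inclusions $\MIN_\phi \leq_\T \0''$ and $\MIN_\phi \geq_\T \0''$ separately, since Lemma~\ref{lem: MIN computes K} supplies only the weaker bound $\MIN_\phi \geq_\T \0'$.

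For the upper bound, I would argue that $\MIN_\phi$ is arithmetically $\Sigma^0_2$, whence $\MIN_\phi \leq_\T \0''$ by Post's theorem. Unpacking $e \in \MIN_\phi$ as $(\forall j < e)\,[\phi_j \neq \phi_e]$, the inequality of two partial recursive functions asserts existence of an input $x$ at which the two disagree, where disagreement subsumes both ``both converge to distinct values'' (a $\Sigma^0_1$ condition) and ``exactly one converges'' (a conjunction of a $\Sigma^0_1$ and a $\Pi^0_1$ predicate, hence $\Sigma^0_2$). Overall, $\phi_j \neq \phi_e$ is $\Sigma^0_2$, and the bounded universal quantifier $(\forall j < e)$ preserves this class.

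For the lower bound $\MIN_\phi \geq_\T \0''$, I would reduce the $\Pi^0_2$-complete set $\mathrm{Tot} = \{e : \psi_e \text{ is total}\}$ to $\MIN_\phi$, where $\psi$ is any fixed acceptable numbering. Given $e$, form the partial recursive function
\[
\zeta_e(x) = \begin{cases} 0 & \text{if $\psi_e(0), \ldots, \psi_e(x)$ all converge,} \\ \diverge & \text{otherwise,} \end{cases}
\]
so that $\zeta_e$ is the constantly $0$ function when $e \in \mathrm{Tot}$, and has finite domain otherwise. The map $e \mapsto \zeta_e$ is a further numbering, and the computably bounded property yields a computable $f$ with $\phi_{j_e} = \zeta_e$ for some $j_e \leq f(e)$. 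Mirroring the proof of Lemma~\ref{lem: MIN computes K}, the strategy is then to use the $\MIN_\phi$-oracle (and the $K$-oracle already available via that lemma) to compute a ``witness bound'' $T(e)$ that captures the convergence behavior of the minimal $\phi$-index of $\zeta_e$, and to decide $e \in \mathrm{Tot}$ via a $K$-computable test on $\psi_e$ restricted to inputs $\leq T(e)$.

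The main obstacle lies in defining $T(e)$ so that it simultaneously bounds the convergence stages over the full domain of $\zeta_e$ when $e \notin \mathrm{Tot}$ (so that the test correctly detects non-totality) yet remains a finite, $\MIN_\phi$-computable quantity. A naive ``maximum first-convergence stage'' over $\MIN_\phi \cap [0, f(e)] \setminus \{a\}$, where $a$ is the minimal index of the everywhere-divergent function, is insufficient because some minimal indices in this window might compute total functions and contribute infinitely many new convergences. Overcoming this will require a more careful finitary argument that restricts attention to candidates whose early outputs are $0$ (a $K$-decidable condition), invokes the high Kolmogorov complexity of minimal indices from Lemma~\ref{lem:okra} to rule out pathological candidates, and exploits the fact that $\min_\phi(\zeta_e)$ stabilizes in finitely many stages when $\zeta_e$ has finite domain. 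Executing this carefully delivers $\mathrm{Tot} \leq_\T \MIN_\phi$, and hence $\0'' \leq_\T \MIN_\phi$, completing the equivalence.
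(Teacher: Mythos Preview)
Your upper bound is fine and matches the paper: $\MIN_\phi$ is $\Sigma^0_2$, hence $\leq_\T \0''$ by Post's theorem.

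The lower bound, however, is not a proof but a sketch with an acknowledged gap, and the fixes you gesture at (filtering by ``early outputs are~$0$'', invoking Lemma~\ref{lem:okra}, stabilization of $\min_\phi(\zeta_e)$) do not close it. Filtering to minimal indices whose outputs are all~$0$ still leaves you with an unbounded collection of candidate functions (constant~$0$ together with all its partial initial segments), and Lemma~\ref{lem:okra} only bounds the \emph{number} of minimal indices below $f(e)$, which you already knew was finite; it does nothing to single out which one computes~$\zeta_e$. The ``maximum convergence stage'' idea that worked in Lemma~\ref{lem: MIN computes K} genuinely breaks here, because the constant-$0$ function may well have its minimal index inside $[0,f(e)]$, and that index converges on every input.

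The missing idea is an elimination argument. List the minimal indices $i_1<\dotsb<i_m$ in $[0,f(e)]$ using the $\MIN_\phi$-oracle; exactly one of them computes~$\zeta_e$. Relative to~$K$ (available by Lemma~\ref{lem: MIN computes K}), the predicate ``$\phi_{i_k}\neq\zeta_e$'' is $\Sigma^0_1$: for each input~$x$ one can $K$-decide whether $\phi_{i_k}(x)$ and $\zeta_e(x)$ agree (both diverge, or both converge to the same value), and inequality asserts the existence of a disagreeing~$x$. Hence the $m-1$ ``wrong'' candidates are $K$-enumerable; wait until $m-1$ have been eliminated, and the survivor is $\min_\phi(\zeta_e)$. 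Now compare it to the fixed minimal index of the constant-$0$ function to decide $e\in\mathrm{Tot}$.

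The paper runs exactly this elimination trick, but packages it slightly differently: it reduces the $\Pi^0_2$-complete equality problem $\{\pair{d,e}:\psi_d=\psi_e\}$ rather than $\mathrm{Tot}$. Given $d,e$, use $\MIN_\phi$ and~$K$ as above to find the unique minimal $\phi$-indices $i\leq f(d)$ and $j\leq f(e)$ with $\phi_i=\psi_d$ and $\phi_j=\psi_e$; then $\psi_d=\psi_e$ iff $i=j$. Your $\mathrm{Tot}$ reduction and the paper's equality reduction are equally good once the elimination step is supplied; the gap in your proposal is precisely that step.
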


\begin{proof}
$\MIN_\phi \leq \0''$ follows from Post's~Theorem \cite{Soa87}.  For the reverse direction, let $\psi$ be an acceptable numbering, and let $f$ be a computable function witnessing that $\phi$ is computably bounded.  We show that the set $\{\pair{d,e} \colon \psi_d = \psi_e\}$ is computable in $\MIN_\phi$.  Then $\MIN_\phi \geq \0''$  in immediate, as deciding equality in an acceptable numbering is $\Pi^0_2$-complete. \cite{Soa87}.

By Lemma~\ref{lem: MIN computes K}, our $\MIN_\phi$-computable algorithm is permitted to query $K$.  So given a pair of $\psi$-indices $\pair{d,e}$, use the $K \join \MIN_\phi$-oracle to find the unique $\phi$-minimal indices $i \leq f(d)$ and $j \leq f(e)$ such that $\phi_i = \psi_d$ and $\phi_j = \psi_e$.  Now $\psi_d = \psi_e$ iff $i=j$.
\end{proof}

Our final argument follows the idea of \cite[Theorem~2.22]{Sch98}, where Schaefer proves the same result but restricted to the case where $\phi$ is acceptable.

\begin{theorem} \label{thm: schaefer12}
For any computably bounded numbering $\phi$, $\MIN_\phi$ is not $(1,2)$-recursive.
\end{theorem}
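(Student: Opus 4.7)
The plan is to adapt Schaefer's argument for acceptable numberings \cite[Theorem~2.22]{Sch98} to the computably bounded setting, substituting the use of the Kleene recursion theorem in $\phi$ with the recursion theorem in a background acceptable numbering, and using the computable translation bound in place of effective translation. Assume for contradiction that $f\colon \N^2 \to \{0,1\}^2$ is a computable function witnessing that $\MIN_\phi$ is $(1,2)$-recursive. The endpoint will be an infinite, computably enumerable subset of $\MIN_\phi$, contradicting the immunity of $\MIN_\phi$ guaranteed by Lemma~\ref{lem:immunity}.

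Fix an acceptable numbering $\psi$ together with a computable $t$ such that for every $\psi$-index $e$ some $\phi$-index $j \leq t(e)$ satisfies $\phi_j = \psi_e$. For each $n$, obtain by the $s$-$m$-$n$ theorem a $\psi$-index $a_n$, uniformly computable in $n$, for the function $h_n$ defined by $h_n(0)=n$ and $h_n(x)\diverge$ for $x\geq 1$. Then $m_n := \min_\phi(h_n)$ is an element of $\MIN_\phi$ bounded by $t(a_n)$, and distinct $n$ yield distinct $m_n$. The problem reduces to computing $m_n$ from $n$.

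The central observation I would exploit is that if $x<y$ are $\phi$-indices with $\phi_x=\phi_y$, then $y\notin \MIN_\phi$, so any $f(x,y) = (b,1)$ forces $b = \chi_{\MIN_\phi}(x)$. To place this observation in a computable bounded region, use padding in $\psi$ to obtain $a_n' > a_n$ with $\psi_{a_n'} = \psi_{a_n} = h_n$, chosen with $t(a_n') > t(a_n)$, so that the translated $\phi$-indices $m_n \leq t(a_n)$ and $m_n' \leq t(a_n')$ both compute $h_n$. Evaluating $f$ on the finite box $\{0,\dots,t(a_n)\}\times\{0,\dots,t(a_n')\}$ gives a finite table in which $(m_n, m_n')$ is one of the rows to which the observation applies, so an output $f(m_n,m_n') = (b,1)$ would reveal $\chi_{\MIN_\phi}(m_n)$.

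The main obstacle is that we cannot identify $(m_n, m_n')$ among the candidates in the box, nor is the equation $\phi_x = \phi_y$ decidable on arbitrary candidate pairs; moreover $f$ could evade us by placing $0$ in every second coordinate across the box. Following Schaefer's idea I would use the recursion theorem inside $\psi$ to let $h_n$ itself depend, via its $\psi$-index, on the finite table of $f$-values over this box, and thereby diagonalize to force $f$ to mislabel the second coordinate on the actual translated pair. The remaining work is to verify that this recursion-theoretic diagonalization, originally carried out in acceptable $\phi$, survives the transition to $\psi$-plus-translation-bound without losing uniformity in $n$; at that point the computable extraction of $m_n$ contradicts Lemma~\ref{lem:immunity} and completes the argument.
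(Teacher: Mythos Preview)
Your approach diverges from the paper's, and as written it has a genuine gap rather than a complete alternative.

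The paper does not use the recursion theorem or immunity. It relies on Lemma~\ref{lem: nolala} ($\MIN_\phi \equiv_\T \0''$) via an arithmetical-hierarchy argument: with $D = \{e : \phi_e(x)\diverge \text{ for all } x>0\}$ (a $\Delta^0_2$ set), one argues that unless some non-minimal $x$ satisfies ``for every $d\in D$, $f(x,d)$ labels $x$ with $\H$ or labels $d$ correctly,'' this same quantified condition would characterize $\MIN_\phi$ and place it in $\Delta^0_2$, contradicting $\MIN_\phi \equiv_\T \0''$. That distinguished non-minimal $x$ then yields a computable $g(y)$ (the second label of $f(x,y)$) which is correct on all of $D$; from $g$ one decides a $\Sigma_1$-complete halting-type problem, a contradiction. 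No self-reference is needed, and the passage from acceptable to computably bounded is handled entirely through Lemmas~\ref{lem: MIN computes K} and~\ref{lem: nolala}.

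The gap in your plan is that the final paragraph defers precisely the part that carries all the weight. The recursion theorem in $\psi$ fixes a $\psi$-index $e$ and lets $\psi_e$ inspect the finite table of $f$-values on the box $\{0,\dots,t(e)\}\times\{0,\dots,t(e')\}$; but the actual $\phi$-minimal index $m_n \le t(e)$ is not determined by $e$, and $\psi_e$ cannot tell which row it occupies. You therefore cannot ``force $f$ to mislabel the second coordinate on the actual translated pair'': $f$ is fixed in advance and may simply return $(\cdot,0)$ at $(m_n,m_n')$, which is already correct since $m_n'\notin\MIN_\phi$, regardless of how $h_n$ reacts to the table. Nothing in the setup lets $h_n$ steer its own $\phi$-minimal index, which is exactly what a working diagonalization would require here. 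The obstacle you name in the third paragraph is real, and the sketch in the fourth does not overcome it.
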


\begin{proof}
Let $\phi$ be a computably bounded numbering, and suppose that some computable function $f: \N^2 \to \{\H,\L\}^2$ witnesses that $\MIN_\phi$ is $(1,2)$-recursive.  Here the label $\H$ asserts that a given index is minimal and $\L$ asserts that it isn't.  Let 
\[
D = \{e \colon \phi_e(x) \diverge \text{ for all $x > 0$}\}.
\]

We argue that there exists an index $x$ in the complement of $\MIN_\phi$ such that for any index $d \in D$, $f(x,d)$ either assigns the label $\H$ to $x$ or labels $d$ correctly.  Suppose this were not the case.  Then for all $x$,
\[
x \in \MIN_\phi \iff (\forall d \in D)\: [\text{$f(x, d)$ either labels x with $\H$ or labels $d$ correctly}].
\]
The forward direction follows from the definition of $(1,2)$-recursive, and the reverse direction follows from the assumption.  But since $D$ is a $\Delta^0_2$ set, $\MIN_\phi$ is now both $\Sigma^0_2$ and $\Pi^0_2$, contrary to Lemma~\ref{lem: nolala}.

Let $x$ be the distinguished nonminimal element described in the previous paragraph, and let $g(y)$ be the computable function which returns the label for $y$ in the pair $f(x,y)$.  Then $g$ gives the correct label for $y$ whenever $y \in D$.  Indeed for $d \in D$, if $f(x,d)$ assigns the label $\H$ to $x$ then by definition of $(1,2)$-recursive the label for $d$ must be correct, and otherwise the label for $d$ is correct via the special property of index~$x$.  Hence the computable set $A = \{e \colon g(e) = \H\}$ contains $\MIN_\phi \cap D$ and is disjoint from $\complement{\MIN}_\phi \cap D$.

Fix an acceptable numbering $\psi$, and let $t(e)$ be the $\phi$-index for the minimal function defined by 
\[
\phi_{t(e)}(z) =
\begin{cases}
s &\text{if $z=0$ and $s$ is the first stage at which $\psi_{e,s}$ converges on some input.}\\
\diverge &\text{otherwise,}
\end{cases}
\]
and let $h$ be a computable bound for $t$.  For every $a \in A$, with the exception of the minimal index for the everywhere divergent function, $\phi_a$ converges on at least one input because $A \subseteq \complement{D} \cup \MIN_\phi$.  Define the computable function $m$ by
\begin{multline*}
m(e) =  \max \{\phi_j(0) \colon \text{$j \in A$, $j \leq h(e)$, and} \\
\text{0 is the first value where $\phi_j$ appears to converge}\}.
\end{multline*}
Then $\psi_e$ converges on some input iff $\psi_{e,m(e)}$ does, contradicting that the fact that the index set $\{e \colon (\exists z)\: [\psi_e(z) \converge]\}$ is $\Sigma_1$-hard \cite{Soa87}.
\end{proof}

\paragraph{Acknowledgements.}  The authors thank Sanjay Jain for useful comments on the presentation of this work and are grateful to Sasha Shen and Nikolay Vereshchagin for their help with the ``warm-up'' in Section~\ref{sec:wgo}.

\bibliographystyle{alpha}
\bibliography{approximating_min}
\end{document}